\documentclass[12pt,leqno]{amsart}
\usepackage{graphicx}
\baselineskip=16pt

\topmargin= .5cm
\textheight= 20cm
\textwidth= 32cc
\baselineskip=16pt

\evensidemargin= .9cm
\oddsidemargin= .9cm
\usepackage[utf8]{inputenc}
\usepackage[T1]{fontenc}
\usepackage{comment}

\usepackage{a4wide}
\usepackage{indentfirst}

\usepackage{amsmath,amssymb}
\usepackage{amsthm}
\usepackage{hyperref}

\parskip=5pt
\parindent=15pt
\usepackage[margin=1.2in]{geometry}
\usepackage{graphicx}
\usepackage{listings}
\usepackage[utf8]{inputenc}
\usepackage{appendix}
\setcounter{page}{1}

\numberwithin{equation}{section}
\newtheorem{teo}{Theorem}[section]
\newtheorem*{teo*}{Theorem}
\newtheorem*{prop*}{Proposition}
\newtheorem*{corol*}{Corollary}
\newtheorem{prop}[teo]{Proposition}

\newtheorem{corol}[teo]{Corollary}
\newtheorem{lema}[teo]{Lemma}
\newtheorem{defi}[teo]{Definition}

\theoremstyle{definition}

\newtheorem{rmk}{Remark}

\newcommand{\D}{\mathbb{D}}
\newcommand{\E}{\mathbb{E}}

\newcommand{\R}{\mathbb{R}}

\newcommand{\1}{\mathbf{1}}
\newcommand{\phy}{\varphi}
\newcommand{\eps}{\varepsilon}

\title[\textbf{ON THE POSITIVITY OF THE DENSITY OF SDDEs driven by a fbm}]{\textbf{ON THE POSITIVITY OF THE DENSITY OF STOCHASTIC DELAY DIFFERENTIAL EQUATIONS DRIVEN BY A FRACTIONAL BROWNIAN MOTION}}
\author{Òscar Burés, Carles Rovira}

\address{Òscar Burés, Facultat de Matemàtiques i Informàtica, Universitat de Barcelona.
Gran Via de les Corts Catalanes, 585, 08007 Barcelona.}
\email{oscar.bures.mogollon@gmail.com}

\address{Carles Rovira, Facultat de Matemàtiques i Informàtica, Universitat de Barcelona.
Gran Via de les Corts Catalanes, 585, 08007 Barcelona.}
\email{carles.rovira@ub.edu}
\date{\today}
\thanks{C. Rovira is supported by the grant PID2021-123733NB-I00 from SEIDI, Ministerio de Economia y Competividad.}
\begin{document}
\begin{abstract}
    In this paper, we consider a Stochastic Delay Differential Equation with constant delay $r > 0$ and, under the same conditions on the coefficients needed to ensure the smoothness of the density plus an ellipticity condition on the diffusion term, we prove that the density function of the solution is strictly positive in its support. In order to prove it, we give a Gaussian-type lower bound for the density of the solution combining the Nourdin and Viens' density bounding method together with Kohatsu-Higa's method.
\end{abstract}

\maketitle

\section{Introduction}
Let \(\Omega = \mathcal{C}_0([0,T])\) represent the space of continuous functions on the interval \([0,T]\) that vanish at zero, equipped with the supremum norm, thus making it a Banach space. We consider the Borel \(\sigma\)-field \(\mathcal{F} := \mathcal{B}(\Omega)\) and a probability measure \(P\) under which the canonical process defined by \(B^H_t(\omega) = \omega(t)\) for \(\omega \in \Omega\) constitutes a fractional Brownian motion (fBm) with Hurst parameter \(H > 1/2\). Throughout this paper, we will utilize the probability space \((\Omega, \mathcal{F}, P)\) as the foundation for our analysis.

A fractional Brownian motion \(B^H = \{B^H_t; t \in [0,T]\}\) is a centered Gaussian process with covariance function given by 
\[
R_H(t,s) := \E[B_t^H B_s^H] =  \frac{|t|^{2H} + |s|^{2H} - |t - s|^{2H}}{2}.
\]
Although this process does not exhibit independent increments, it possesses stationary increments. This feature allows us to establish the inequality
\[
\E(|B^H_t - B^H_s|^p) \leq C |t - s|^{pH},
\]
which, by Kolmogorov's continuity criterion, indicates that \(B^H\) has \(\gamma\)-Hölder continuous paths for any \(\gamma \in (0,H)\).

This paper focuses on stochastic delay differential equations (SDDEs) driven by fractional Brownian motion, specifically following the form introduced by Ferrante and Rovira in \cite{ferrante2006stochastic}:
\begin{equation} \label{e: main equation}
\begin{cases}
    X_t = \eta_0 + \int_0^t \sigma(X_{s - r}) \, dB^H_s + \int_0^t b(X_s) \, ds, \quad t \in (0,T], \\
    X_t = \eta_t, \quad t \in [-r,0],
\end{cases}
\end{equation}
where \(\eta\) is a smooth function, \(r > 0\) is a constant delay parameter and $B^H$ is a fBm of Hurst parameter $H > 1/2$. Given the regularity properties of \(B^H\) for \(H > 1/2\), the stochastic integral in this equation can be interpreted as a Riemann-Stieltjes integral, leveraging Young's results in \cite{Young}. Furthermore, Zähle's framework for fractional calculus \cite{Zhle1998IntegrationWR} allows us to represent this integral as a Lebesgue integral via an integration-by-parts formula.

In 2002, Nualart and Rascanu established the existence and uniqueness of solutions for general stochastic differential equations (SDEs) driven by a fBm with Hurst parameter \(H > 1/2\) in their work \cite{rascanu2002differential}. Their proof can be easily adapted to proof the existence and uniqueness of solutions for stochastic delay differential equations of the form \eqref{e: main equation}. In 2006, Ferrante and Rovira presented an alternative approach to proving the existence and uniqueness of solutions to equation \eqref{e: main equation} in \cite{ferrante2006stochastic}. They utilized an inductive strategy that specifically leveraged the delay property, thereby providing a different perspective from that of Nualart and Rascanu. Additionally, Ferrante and Rovira established the existence and smoothness of the solution's density using methods tailored for the delayed framework. A significant advancement in understanding the law of general SDEs was made by Nualart and Saussereau in 2009. In \cite{nualart2009malliavin}, they proved the existence of the density by connecting Malliavin differentiability with Fréchet differentiability. Again, the proof presented for the general framework in \cite{nualart2009malliavin} can also be straightforwardly adapted to SDDEs. In 2012, León and Tindel, proved in \cite{leon:hal-00440655} the smoothness of the density of solutions to SDDEs using rough path techniques, arriving the same conclusion as Rovira and Ferrante in \cite{ferrante2006stochastic} for $H > 1/2$ using a different method.

In this paper, we take as a starting point the results of \cite{ferrante2006stochastic} and \cite{leon:hal-00440655}, that ensure us that the density of the solution to \eqref{e: main equation} is smooth, and we aim to prove that under smoothness conditions on the coefficients this density function is strictly positive in its support.

SDDEs of this nature are significant in various fields, including mathematical finance and biological modeling, where delays in data and external noise—modeled as fractional noise—play a crucial role. For instance, Arriojas et al. \cite{arriojas2006delayed} examined financial models driven by SDDEs, while similar frameworks are utilized in biology to account for delayed feedback influenced by noisy environments.

Our main result, stated formally in Theorem \ref{t: main result}, provides a lower bound for the density function of the solution to the SDDE. Specifically, we prove that under certain regularity and ellipticity conditions on \(\sigma\) and \(b\), the density \(p_t(x)\) of the solution to equation \eqref{e: main equation} satisfies:
\begin{equation} \label{e: bound introduction 1}
    p_t(x) \geq \frac{\E(|X_t-m_t|)}{c_1t^{2H}}\exp\left( -\frac{c_2(x-m_t)^2}{t^{2H}}\right)
\end{equation}
for $t \in (0,r]$ and
\begin{equation} \label{e: bound introduction 2}
p_t(x) \geq \frac{c_3}{t^H} \exp \left( -\frac{c_4(x - \eta_0)^2}{t^{2H}} \right),
\end{equation}
for $t \in (r,T]$, where \(m_t = \E[X_t]\). This result implies, as an immediate corollary, the strict positivity of the density in its support.

The paper is organized as follows: Section \ref{s: preliminaries} introduces the Malliavin calculus tools and techniques used to get to the conclusion of this work. In section \ref{s: lower bound} we prove that the density function of the solution to \eqref{e: main equation} satisfies the bounds \eqref{e: bound introduction 1} and \eqref{e: bound introduction 2}. In the same section, we also state the corollary that concludes the work presented in this paper. Finally, there is an appendix containing two auxiliary results of real analysis that are key in order to derive the Gaussian-type lower bound of the density $p_t(x)$ of the solution to \eqref{e: main equation}.

\section{Preliminaries} \label{s: preliminaries}
\subsection{Wiener space associated to the fractional Brownian motion}

We define $\mathcal{E}$ as the space of step functions of the form
\[
s(t) = \sum_{j=1}^n a_j \1_{[0,t_j]}(t)
\]
with respect to the inner product defined by
\begin{equation} \label{inner product}
    \langle \1_{[0,t]}, \1_{[0,s]} \rangle_{\mathcal{H}} = R_H(t,s)
\end{equation}
where $R_H$ is the covariance function of the fBm. This inner product extends to $\mathcal{E}$ by linearity. We define $\mathcal{H}$ as the closure of $\mathcal{E}$ with respect to the inner product \eqref{inner product}. Moreover, for $\phy, \psi \in \mathcal{H}$, the inner product \eqref{inner product} can be written as
\[
\langle \phy, \psi\rangle_{\mathcal{H}} = \alpha_H \int_0^T  \int_0^T \phy_u \psi_v|u-v|^{2H-2}  du dv,
\]
where
\[
\alpha_H = H(2H-1).
\]
Consider the kernel $K_H : [0,T]^2 \to \R$ defined by
\[
\begin{split}
        &K_H(t,s) = c_H s^{1/2 - H}\int_s^t (u-s)^{H-3/2}u^{H-1/2} du, \\
        &c_H = \sqrt{\frac{H(2H-1)}{\beta(2-2H, H-1/2)}}
\end{split}
\]
for $t > s$ and $K_H(t,s) = 0$ in case $t \leq s$. This kernel induces an isometry between $\mathcal{H}$ and $L^2([0,T])$. Indeed, the operator $K^*_{H}$ defined by
\[
(K_H^* \phy)_s = \int_s^T \phy_t\partial_t K_H(s,t) dt
\]
satisfies that, for every $\phy, \psi \in \mathcal{H}$,
\[
\langle \phy, \psi \rangle_{\mathcal{H}} = \langle K^*_H \phy, K_H^* \psi \rangle_{L^2}.
\]
Moreover, if $W = \{W_t; t \in [0,T]\}$ denotes a standard Brownian motion and we fix $H > 1/2$, then the Volterra process
\[
B^H_t := \int_0^t K_H(t,s) dW_s
\]
is a fractional Brownian motion of Hurst index $H$. Abusing of the notation, we also define $B^H := \{ B^H(h); h \in \mathcal{H}\}$ the isonormal process associated to $\mathcal{H}$. Notice that the process $B^H_t := B^H(\1_{[0,t]})$ is, again, a fractional Brownian motion of Hurst index $H$. We can represent $B^H(h)$ as
\[
B^H(h) = \int_0^T h_t dB^H_t
\]
where this last integral can be understood in the Young sense (see \cite{Young}). 
\subsection{Malliavin Calculus tools}
\begin{defi}
    We define $\mathcal{S}$ as the space of random variables $F$ of the form
    \[
    F = f(B^H(h_1), \dots, B^H(h_n)); \quad f \in \mathcal{C}^{\infty}_p(\R^n), \quad n \geq 1.
    \]
We say that $\mathcal{S}$ is the space of cylindrical random variables.
\end{defi}
We can define the Malliavin derivative in this class of random variables $\mathcal{S}$.
\begin{defi}
    For $F \in \mathcal{S}$, we define $DF$ as the $\mathcal{H}$-valued random variable of the form
    \[
    DF = \sum_{j=1}^n \partial_j f (B^H(h_1), \dots, B^H(h_n))h_j.
    \]
    Given $h \in \mathcal{H}$, we can also define the derivative of $F$ in the direction of $h$ as 
    \[
    D_h F = \langle DF, h\rangle_{\mathcal{H}}.
    \]
    We also define the process $\{D_tF; t \in [0,T]\}$ as
    \[
    D_tF = \sum_{j=1}^n \partial_j f (B^H(h_1), \dots, B^H(h_n))h_j(t).
    \]
\end{defi}
In the same way as we have introduced first order derivatives, we can define derivatives of order $k \geq 2$ as follows: for $t_1, \dots t_k \in [0,T]$, we define $D^{(k)}_{t_1, \dots, t_k}$ as the $\mathcal{H}^{\otimes k}$-valued random variable
\[
D^{(k)}_{t_1, \dots, t_k} F = D_{t_1} \cdots D_{t_k}F. 
\]
For every $k \geq 1$ and $p \geq 1$, the operators $D^{(k)}$ are closable from $\mathcal{S}$ to $L^P(\Omega; \mathcal{H}^{\otimes k}$). We denote by $\D^{k,p}$ the closure of $\mathcal{S}$ with respect to the norm
\[
||F||_{k,p} = \left( \E[|F|^p] + \sum_{j=1}^k \E[||D^{(j)}F||^p_{\mathcal{H}^{\otimes j}}]\right)^{1/p}.
\]
Notice that $\D^{k, p} \subset \D^{l, p}$ if $l\leq k$ and $\D^{k,p} \subset \D^{k,q}$ if $q \leq p$. For a random variable $F \in \D^{1,2}$, we define $\Gamma_F$ and $\Gamma_F^{-1}$ (provided $||DF||^2_{\mathcal{H}} > 0$ a.s.) as
\[
\Gamma_F = ||DF||^2_{\mathcal{H}}, \quad \Gamma_F^{-1} = \left( ||DF||^2_{\mathcal{H}}\right)^{-1}.
\]
In the same way that we have introduced the Malliavin derivative in the classical Malliavin calculus setting, we need to introduce the conditional Malliavin calculus. To this end, let $\mathbb{F} =\{ \mathcal{F}_t; t \in [0,T]\}$ be the natural filtration associated to $B^H$. For a given $t \in [0,T]$ and a random variable $F \in L^1(\Omega)$, we set
\[
E_t[F] = E[F |\mathcal{F}_t].
\]
In the same way as with the classical Malliavin calculus, we denote by $||F||_{k,p,t}$ and $\Gamma_{F,t}$ the following objects
\begin{equation} \label{def conditional quantities BH}
||F||_{k,p,t} = \left( E_t[|F|^p] + \sum_{j=1}^k E_t[||D^{(j)}F||^p_{\mathcal{H}[t,T]^{\otimes j}}]\right)^{1/p}_{,} \quad \Gamma_{F,t} = ||DF||^2_{\mathcal{H}[t,T]},
\end{equation}
for $t \in [0,T]$, where, for $s, t \in [0,T]$ with $s\leq t$ and $\phy \in \mathcal{H}$,
\[
||\phy||^2_{\mathcal{H}[s, t]} = \int_s^t \int_s^t \phy_u \phy_v |u-v|^{2H-2} dudv.
\]
We are now interested in an integration-by-parts formula similar to the one found in \cite{nualart2006malliavin}. For our purposes, we will use a conditional version of this formula. The result is stated in the Wiener process $W$ in $[0,T]$ framework. Since our aim is a conditional IBP formula for the fBm case, the first thing we shall show in order to make sense of this result is that the Malliavin derivative with respect to $W$ and the Malliavin derivative with respect to $B^H$ are related in some way. The relation between these two operators can be stated as follows.

\begin{prop} \label{relation derivatives}
Let $\D^{1,2}_W$ be the Malliavin-Sobolev space associated to $W$. Then, $\D^{1,2} = (K_H^*)^{-1} \D^{1.2}_W$ and for every $F \in \D_W^{1,2}$, we have
\[
D^WF = K_H^* DF
\]
whenever both sides of the equality are well defined.
\end{prop}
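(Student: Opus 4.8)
The plan is to first verify the identity on the dense class $\mathcal{S}$ of cylindrical random variables and then extend it by an isometry and closability argument. The crucial preliminary observation is that $K_H^*$ intertwines the two isonormal processes. Indeed, since $\langle \phy, \psi\rangle_{\H} = \langle K_H^* \phy, K_H^* \psi\rangle_{L^2}$ and $K_H^*$ is a bijective isometry from $\H$ onto $L^2([0,T])$, the family $h \mapsto W(K_H^* h)$ is centered Gaussian with covariance $\langle h, g\rangle_{\H}$, and it agrees with $B^H_t$ for $h = \1_{[0,t]}$ because $W(K_H^* \1_{[0,t]}) = \int_0^t K_H(t,s)\, dW_s = B^H_t$. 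Hence $B^H(h) = W(K_H^* h)$ for every $h \in \H$. In particular, because $K_H^*$ maps $\H$ bijectively onto $L^2([0,T])$, the cylindrical random variables for $B^H$ and for $W$ are literally the same class $\mathcal{S}$, via $F = f(B^H(h_1), \dots, B^H(h_n)) = f(W(K_H^* h_1), \dots, W(K_H^* h_n))$.

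Second, for such $F \in \mathcal{S}$ I would compute both derivatives by the chain rule and compare. By definition $DF = \sum_{j=1}^n \partial_j f(B^H(h_1), \dots, B^H(h_n)) h_j$, which is $\H$-valued, whereas the Wiener derivative of the very same variable is $D^W F = \sum_{j=1}^n \partial_j f(B^H(h_1), \dots, B^H(h_n)) K_H^* h_j$, which is $L^2([0,T])$-valued. Applying the bounded linear operator $K_H^*$ to $DF$, so that it commutes with the finite sum and with the scalar random coefficients $\partial_j f$, produces exactly $D^W F$, establishing $D^W F = K_H^* DF$ on $\mathcal{S}$.

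Third, to pass from $\mathcal{S}$ to the closures I would use that $K_H^*$ lifts to a pathwise isometry. Since $\|DF(\omega)\|_{\H} = \|K_H^* DF(\omega)\|_{L^2} = \|D^W F(\omega)\|_{L^2}$ for a.e. $\omega$, taking expectations gives $\E[\|DF\|_{\H}^2] = \E[\|D^W F\|_{L^2}^2]$, so the two Sobolev norms coincide on $\mathcal{S}$. Consequently the completions of the common core $\mathcal{S}$ with respect to these equal norms coincide, yielding $\D^{1,2} = \D^{1,2}_W$ as spaces of random variables, equivalently $\D^{1,2} = (K_H^*)^{-1}\D^{1,2}_W$ once the derivative is transported by $K_H^*$, and the boundedness of $K_H^*$ together with the closedness of $D$ and $D^W$ lets $D^W F = K_H^* DF$ extend from $\mathcal{S}$ to all of $\D^{1,2}$.

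The main obstacle I anticipate is not the algebraic chain-rule step but the functional-analytic bookkeeping in the last part: one must check that $\mathcal{S}$ is simultaneously a core for both $D$ and $D^W$, that $K_H^*$ genuinely lifts to an isometry between the vector-valued spaces $L^2(\Omega;\H)$ and $L^2(\Omega; L^2([0,T]))$, and that the closability of both derivative operators is compatible with this lift, so that limits along Cauchy sequences in $\mathcal{S}$ are carried to limits by $K_H^*$. This is precisely what makes the equality of domains and the extension of the identity $D^W F = K_H^* DF$ rigorous rather than merely formal.
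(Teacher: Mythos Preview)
Your proposal is correct and follows the standard route (identify the isonormal processes via $B^H(h)=W(K_H^*h)$, check the identity on cylindrical functionals by the chain rule, then extend by the isometry $\|K_H^*\cdot\|_{L^2}=\|\cdot\|_{\H}$ and closability). The paper itself does not give a proof at all: its entire argument is the one-line reference ``See \cite{nualart2006malliavin}'', so your write-up is strictly more detailed than what the paper provides and coincides with the approach in the cited source.
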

\begin{proof}
    See \cite{nualart2006malliavin}.
\end{proof}
In the same way that we have defined the objects \eqref{def conditional quantities BH} in the fractional Brownian motion case, we can define them in the standard Brownian motion case. When working in this framework, we define:
\begin{equation}
||F||_{k,p,t, W} = \left( E_t[|F|^p] + \sum_{j=1}^k E_t[||(D^W)^jF||^p_{L^2[t,T]^{\otimes j}}]\right)^{1/p}, \quad \Gamma_{F,t}^{W} = ||D^W F||^2_{L^2[t,T]}.
\end{equation}
Now that we know that there exists a relation between the derivative operators with respect to $W$ and $B^H$, we can then state the conditional integration by parts formula for $W$ and the result for $B^H$ will be a direct consequence of applying the proposition \ref{relation derivatives}. The result for the Wiener process $W$ is stated in \cite{nualart2006malliavin}. Its conditional version (see, for instance, \cite{kohatsu2003lower} or \cite{besalu2016gaussian}) can be formulated as follows:

\begin{prop} \label{IBP}
    Fix $n \geq 1$. Suppose $F$ is a non-degenerate random variable, let $Z \in \D_{W}^{\infty}$ be an $\mathcal{F}_s$-measurable random variable such that $F +Z$ is a non-degenerate random variable and let $G \in \D_{W}^{\infty}$. We denote by $\1^n:=(1,\dots, 1) \in \R^n$. For any function $g \in \mathcal{C}_p^{\infty}(\R)$ (i.e. $g$ is smooth and $g$, together with all its partial derivatives, have at most polynomial growth) there exists a random variable $H^s_{(\1^n)}(F,G)$ such that
    \begin{equation} \label{e: IBP}
    E_{s}[g^{(n)}(F + Z) G] = E_s[g(F+Z) H^s_{(\1^n)}(F,G)],
    \end{equation}
      where $H^s_{(\1^n)}(F,G)$ is defined recursively by
    \[
    H^s_{(1)}(F,G) = \delta^W_s(G (\Gamma^{-1}_{F,s})DF)
    \]
    and
    \[
    H^s_{(\1^n)}(F,G) = H^s_{(1)}(F, H^s_{(\1^{n-1})}(F,G)).
    \]
    Here, $\delta^W_s$ denotes the Skorohod integral in the interval $[s,T]$. Moreover, for $q_1, q_2, q_3$ such that $\frac{1}{p} = \frac{1}{q_1} + \frac{1}{q_2} + \frac{1}{q_3}$ then the following estimate holds:
    \begin{equation} \label{IBP estimate}
        ||H^s_{(\1^n)}(F,G)||_{p,s, W} \leq c ||(\Gamma^W_{F, s})^{-1}||_{2^{n-1}q_1, s, W}^n||F||^{2(n+1)}_{n+2, 2^n q_2, s, W} ||G||_{n, q_3, s, W}.
    \end{equation}

\end{prop}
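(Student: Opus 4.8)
The plan is to derive the identity by pairing the chain rule for the Malliavin derivative with the conditional duality between $D^W$ and the Skorohod operator $\delta^W_s$ on $[s,T]$, and then to propagate it to arbitrary order $n$ by induction, the recursion in the statement being exactly the iteration of the first‑order step. The key preliminary observation is that, since $Z$ is $\mathcal{F}_s$‑measurable, its Malliavin derivative is supported on $[0,s]$ and therefore vanishes on $[s,T]$; hence $D^W(F+Z) = D^W F$ as elements of $L^2[s,T]$, which is what lets the shift $Z$ be carried through the computation without altering any of the $[s,T]$ objects.

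For the base case $n=1$ I would apply the chain rule to get $D^W\bigl(g(F+Z)\bigr) = g'(F+Z)\,D^W F$ on $[s,T]$, pair it in $L^2[s,T]$ with $(\Gamma^W_{F,s})^{-1}D^W F$, and use $\Gamma^W_{F,s} = \|D^W F\|^2_{L^2[s,T]}$ together with non‑degeneracy (so that $\Gamma^W_{F,s}>0$ a.s.\ and $(\Gamma^W_{F,s})^{-1}$ has the required integrability) to solve for the scalar factor:
\[
g'(F+Z) = \bigl\langle D^W\bigl(g(F+Z)\bigr),\,(\Gamma^W_{F,s})^{-1}D^W F\bigr\rangle_{L^2[s,T]}.
\]
Multiplying by $G$, applying $E_s[\cdot]$, and invoking the conditional duality formula
\[
E_s\bigl[\langle D^W\Phi, u\rangle_{L^2[s,T]}\bigr] = E_s\bigl[\Phi\,\delta^W_s(u)\bigr]
\]
with $\Phi = g(F+Z)$ and $u = G\,(\Gamma^W_{F,s})^{-1}D^W F$ yields precisely \eqref{e: IBP} for $n=1$, with $H^s_{(1)}(F,G) = \delta^W_s\bigl(G\,(\Gamma^W_{F,s})^{-1}D^W F\bigr)$.

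The inductive step is then formal. Assuming \eqref{e: IBP} at order $n-1$ for every admissible weight, I would first apply it to the smooth function $g'$ and weight $G$ to move $n-1$ derivatives, producing $E_s[g'(F+Z)\,H^s_{(\1^{n-1})}(F,G)]$, and then apply the base case once more with $G$ replaced by $H^s_{(\1^{n-1})}(F,G)$. This reproduces the stated recursion $H^s_{(\1^n)}(F,G) = H^s_{(1)}\bigl(F,H^s_{(\1^{n-1})}(F,G)\bigr)$, provided one verifies that each $H^s_{(\1^{k})}(F,G)$ again lies in $\D_W^{\infty}$ so that the hypotheses of the next application hold; this regularity is guaranteed by the norm estimate below.

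The genuinely technical part, and the step I expect to be the main obstacle, is the estimate \eqref{IBP estimate}. I would argue by a parallel induction on $n$: use the conditional Meyer inequalities, which control $\|\delta^W_s(u)\|_{k,p,s,W}$ by $\|u\|_{k+1,p,s,W}$, to pass from $H^s_{(\1^n)}(F,G)$ to the lower‑order object inside the divergence, and then split the resulting product $H^s_{(\1^{n-1})}(F,G)\,(\Gamma^W_{F,s})^{-1}D^W F$ by Hölder's inequality in the conjugate exponents $q_1,q_2,q_3$ with $\tfrac1p=\tfrac1{q_1}+\tfrac1{q_2}+\tfrac1{q_3}$. The delicate bookkeeping is twofold: first, each Malliavin derivative falling on the inverse Malliavin matrix produces, through $D\bigl((\Gamma^W_{F,s})^{-1}\bigr) = -(\Gamma^W_{F,s})^{-2}D\Gamma^W_{F,s}$ and $\Gamma^W_{F,s}=\|D^W F\|^2_{L^2[s,T]}$, extra factors of $(\Gamma^W_{F,s})^{-1}$ and of derivatives of $F$ up to order two, which is the origin of the power $n$ on $\|(\Gamma^W_{F,s})^{-1}\|$, of the regularity index $n+2$ and of the power $2(n+1)$ on $\|F\|$; second, the integrability exponents degrade at each application of Hölder and of Meyer's inequality, the successive doubling giving the factors $2^{n-1}$ and $2^n$ in the norms. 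Assembling these bounds along the recursion gives \eqref{IBP estimate}. Finally, one notes that the whole statement transports verbatim from $W$ to $B^H$ through Proposition \ref{relation derivatives}, since $D^W = K_H^* D$ intertwines the two calculi.
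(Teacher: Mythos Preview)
Your sketch is correct in its architecture: the base case via the chain rule and the conditional duality $E_s[\langle D^W\Phi,u\rangle_{L^2[s,T]}]=E_s[\Phi\,\delta^W_s(u)]$, the inductive propagation of \eqref{e: IBP} through the recursion, and the norm estimate via Meyer's inequality plus repeated H\"older (with the doubling of exponents and the bookkeeping of powers of $(\Gamma^W_{F,s})^{-1}$ and of $\|F\|$) are exactly how this is done. The observation that $Z$ being $\mathcal{F}_s$-measurable forces $D^W Z=0$ on $[s,T]$, so that $D^W(F+Z)=D^W F$ there, is the right way to carry the shift through.

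However, you should be aware that the paper does \emph{not} actually prove Proposition~\ref{IBP}: it is stated as a known result, with the unconditional version attributed to \cite{nualart2006malliavin} and the conditional version to \cite{kohatsu2003lower} and \cite{besalu2016gaussian}, together with the remark that Proposition~\ref{relation derivatives} transfers it from the Wiener setting to the fBm setting. So your proposal is not competing with an alternative argument in the paper; rather, you have supplied the proof that the paper outsources to the literature. What your write-up buys is self-containment; what the paper's approach buys is brevity and the avoidance of the genuinely technical bookkeeping in \eqref{IBP estimate}, which in your sketch is the only part that remains at the level of a plan rather than a verified computation.
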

Some remarks can be deduced from this proposition.
\begin{rmk} \label{remark 1}
    Notice that using an approximation argument, the conclusion of Proposition \ref{IBP} holds for $g(F+Z) = \1_{\{F+Z > x\}}$. Indeed, consider a sequence $\{g_k; k \geq 1\}$ of smooth compactly supported functions that converge to de Dirac's delta centered at a point $x \in \R$. Then, using the dominated convergence theorem, it is clear that the right hand side of \eqref{IBP} converges to
    \[
    E_s[\1_{\{F+Z > x\}}H_{(\1^n)}^s(F,G)],
    \]
    while, concerning the left hand side, the limit is
    \[
    E_s[g^{(n)}(F+Z)G]
    \]
    where $g^{(n)}$ is the $n$-th order distributional derivative of $g$.
\end{rmk}

\begin{rmk} \label{remark 2}
    It is interesting to study (and will be useful in the future) how the integration by parts formula changes with reescalings of $F$. More precisely, given $\mu \in \R$, we want to see how does the estimate \eqref{IBP estimate} change when we consider $\mu F$ instead of $F$. We will prove by using induction on $n$ that
    \begin{equation} \label{IBP reescaling}
        H^s_{(\1^{n})}(\mu F, G) = \frac{1}{\mu^n}H^s_{(\1^n)}(F,G).
    \end{equation}
    First we cover the case $n = 1$. Observe that
    \[
    (\Gamma^{W}_{\mu F,s})^{-1} = (||\mu D^W F||^2_{L^2[s,T]})^{-1} = \mu^{-2} (\Gamma^W_{F,s})^{-1}
    \]
    and obviously, $D^W\mu F = \mu D^W F$. Using then the linearity of $\delta^W_s$ we have
    \[
    H^s_{(1)}(\mu F, G) = \frac{1}{\mu} H^s_{(1)}(F,G).
    \]
    Observe also that $H^s_{(1)}(F, \mu G) = \mu H^s_{(1)}(F, G)$. Assume that it holds for $n-1$. For $n$ we have
    \[
    \begin{split}
        H^s_{(\1^{n}}(\mu F, G) = &H_{(1)}(\mu F, H^s_{(\1^{n-1})}(\mu F, G)) \\
        = & \frac{1}{\mu}H_{(1)}\left( F, \frac{1}{\mu^{n-1}} H^s_{(\1^{n-1})}(F, G)\right) \\
        = & \frac{1}{\mu^n}H_{(1)}\left( F, H^s_{(\1^{n-1})}(F, G)\right) \\
        = & \frac{1}{\mu^n}H^s_{(\1^n)}(F,G).
    \end{split}
    \]
    This proves the desired scaling property of $H^s_{(\1^n)}$.
\end{rmk}
Notice that from the relation given by proposition \ref{relation derivatives}, the formula still holds for the Malliavin spaces associated to the fractional Brownian motion with the change of the underlying Hilbert space and shifting the random variables via $K_H^*$. Hence, Proposition \ref{IBP} also holds in the fractional Brownian motion framework.

\section{Lower bound for the density} \label{s: lower bound}
The objective of this section is to get all the tools needed in order to give a proof of the lower bound. The main result is encapsulated in the following Theorem:
\begin{teo} \label{t: main result}
    Let $X_t$ be the solution to equation \eqref{e: main equation} with $\sigma, b \in \mathcal{C}^{\infty}_b(\R)$, $\eta \in \mathcal{C}^{\infty}_b((-r,0))$, and moreover, there exist two constants $0 < \lambda < \Lambda$ such that $\lambda \leq ||\sigma||_{\infty} \leq \Lambda$. Then, the density function $p_t(x)$ of $X_t$ satisfies
    \begin{equation} \label{e: bound main result 0r}
        p_t(x) \geq \frac{\E(|X_t-m_t|)}{c_1 t^{2H}}\exp \left( - \frac{c_2(x-m_t)^2}{t^{2H}}\right)
    \end{equation}
    if $t \in (0,r]$, and
    \begin{equation} \label{e: bound main result rT}
        p_t(x) \geq \frac{c_3}{t^H}\exp \left( - \frac{c_4(x-\eta_0)^2}{t^{2H}}\right)
    \end{equation}
    if $t \in (r,T]$, where $c_1, c_2, c_3, c_4 > 0$ are real constants and $m_t = \E[X_t]$.
\end{teo}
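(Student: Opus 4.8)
The plan is to treat the two time regimes by the two different techniques announced in the abstract, each matching one of the two forms of the bound.

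\textbf{Regime $t\in(0,r]$ (Nourdin--Viens).} First I would exploit the delay structure: for $t\le r$ and $s\in(0,t]$ one has $s-r\in(-r,0]$, so that $X_{s-r}=\eta_{s-r}$ is deterministic and the diffusion coefficient $\sigma(\eta_{s-r})$ carries no randomness. Differentiating \eqref{e: main equation} in the Malliavin sense then gives a \emph{linear} equation for $D_\theta X_t$ with no stochastic-integral feedback term, whose solution is explicit:
\[
D_\theta X_t=\sigma(\eta_{\theta-r})\exp\Big(\int_\theta^t b'(X_u)\,du\Big)\1_{[0,t]}(\theta).
\]
Since $\sigma$ is continuous with $|\sigma|\ge\lambda>0$ it has constant sign, and together with $\|b'\|_\infty<\infty$ this bounds $D_\theta X_t$ above and below, away from $0$, by positive constants on $[0,t]$. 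I would then apply the Nourdin--Viens density formula to $F=X_t$ (mean $m_t$): writing $g(x)=\E[\langle DF,-DL^{-1}(F-m_t)\rangle_{\mathcal H}\mid F=x]$ and using the Mehler-formula representation of $-DL^{-1}$, the two-sided bounds on $D_\theta X_t$ yield $c\,t^{2H}\le g(x)\le C\,t^{2H}$, the factor $t^{2H}$ arising from $\alpha_H\int_0^t\!\int_0^t|u-v|^{2H-2}\,du\,dv=t^{2H}$. Inserting the upper bound into
\[
p_t(x)=\frac{\E|X_t-m_t|}{2\,g(x-m_t)}\exp\Big(-\int_0^{x-m_t}\frac{y\,dy}{g(y)}\Big)
\]
immediately produces \eqref{e: bound main result 0r}.

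\textbf{Regime $t\in(r,T]$ (Kohatsu--Higa).} Here $\sigma(X_{s-r})$ is genuinely random and the derivative equation acquires the feedback term $\int_\theta^t\sigma'(X_{s-r})D_\theta X_{s-r}\,dB^H_s$, so the explicit formula is lost and Nourdin--Viens no longer delivers the clean centering at $\eta_0$. Instead I would run Kohatsu--Higa's localization scheme built on the conditional integration-by-parts formula of Proposition \ref{IBP}. Concretely: fix a partition $0=s_0<\dots<s_N=t$ adapted to the delay, and on each block use Remark \ref{remark 1} to express the conditional density of $X_{s_{i+1}}$ given $\mathcal F_{s_i}$ as $E_{s_i}[\1_{\{\cdot>x\}}H^{s_i}_{(1)}(\cdot,1)]$; the conditional ellipticity estimate below, together with the weight bound \eqref{IBP estimate} and the scaling relation \eqref{IBP reescaling} (used to expose the correct power of the block length), yields a conditional Gaussian lower bound of the form $C(\Delta s_i)^{-H}\exp\!\big(-c(\cdot)^2/(\Delta s_i)^{2H}\big)$, localized around the $\mathcal F_{s_i}$-measurable predictable part. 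I would then chain these blocks by iterated conditioning, starting from $X_0=\eta_0$, and combine the resulting product of local Gaussians by means of the two real-analysis lemmas of the appendix to obtain the single Gaussian \eqref{e: bound main result rT}, centered at $\eta_0$ with variance of order $t^{2H}$ and prefactor $t^{-H}$.

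\textbf{The crux.} The decisive input in both regimes is the (conditional) non-degeneracy of the Malliavin covariance. For $t\le r$ this is explicit; the genuine obstacle is the lower bound $\Gamma_{X_t,s}\ge c\,(t-s)^{2H}$ together with conditional moment bounds for $\Gamma^{-1}_{X_t,s}$ in the regime $t>r$, where the feedback terms in the derivative equation must be controlled. I would establish it by isolating the leading contribution $\sigma(X_{\theta-r})\1_{[s,t]}(\theta)$ near $\theta=t$ --- bounded below in absolute value by $\lambda$ thanks to ellipticity --- and showing that the stochastic-integral correction is a higher-order perturbation on short blocks, via the Hölder regularity of the paths and Gronwall-type estimates; the non-Markovianity of $B^H$ is absorbed throughout by working with the conditional quantities $\|\cdot\|_{k,p,t}$ and $\Gamma_{\cdot,t}$ and with the Skorohod integral $\delta^W_s$ on $[s,T]$ as in Proposition \ref{IBP}. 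I expect this uniform conditional ellipticity estimate, rather than the chaining or the Nourdin--Viens computation, to be the main technical difficulty.
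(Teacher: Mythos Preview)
Your treatment of $t\in(0,r]$ coincides with the paper's Proposition \ref{density in (0,r]}: the explicit formula $D_\theta X_t=\sigma(\eta_{\theta-r})\exp\bigl(\int_\theta^t b'(X_u)\,du\bigr)$ and the resulting two-sided bound $c\lambda^2 t^{2H}\le g_F(F)\le C\Lambda^2 t^{2H}$ via the Mehler representation are exactly what is done there.

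For $t\in(r,T]$ your high-level strategy is the paper's, but you have misidentified the crux and thereby missed the simplification that the delay provides. The paper does \emph{not} work with $\Gamma_{X_t,s}$ and does not need to control any stochastic-integral feedback. It decomposes the increment as $X_{t_n}-X_{t_{n-1}}=I_n+R_n$ with $I_n=\int_{t_{n-1}}^{t_n}\sigma(X_{s-r})\,dB^H_s$ and $R_n=\int_{t_{n-1}}^{t_n}b(X_s)\,ds$. Because the mesh is taken below $r$, the integrand $\sigma(X_{s-r})$ is $\mathcal F_{t_{n-1}}$-measurable on $[t_{n-1},t_n]$, so $I_n$ is \emph{exactly} conditionally Gaussian: the lower bound $J_{1,n}\ge (2\pi\Lambda^2\sigma_N^2)^{-1/2}\exp\bigl(-(x-F_{n-1})^2/(2\lambda^2\sigma_N^2)\bigr)$ is read off directly (Proposition \ref{fita inf J_1}), with no integration by parts and no ellipticity estimate to prove. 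The IBP of Proposition \ref{IBP} enters only through the Taylor remainder $J_{2,n}$, applied to $U_n=\sigma_N^{-1}(I_n+\rho R_n)$, for which $D_sI_n=\sigma(X_{s-r})$ again has \emph{no} feedback term. Thus the ``stochastic-integral correction'' you plan to tame by H\"older regularity and Gronwall is identically zero on these blocks; the genuine work (Lemmas \ref{X uniformly bounded}--\ref{bound for A3}) is showing $\|DR_n\|_{\mathcal H[t_{n-1},t_n]}^2=O(N^{-1-1/H})$ is negligible next to $\sigma_N^2=O(N^{-1})$, and this is where the appendix lemmas are actually used (Lemma \ref{Ap 1} in Lemma \ref{bound for A2}, Lemma \ref{Ap 2} for the $\mathcal H$-norm integrals), not in gluing the Gaussians during the chaining. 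Your plan would work, but you would be attacking a difficulty that the delay has already eliminated; the paper's point is precisely that choosing mesh $<r$ turns each block into the additive-noise situation you already handled on $(0,r]$.
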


As mentioned, in order to prove this result, we will make a distinction in the cases $(0,r]$ and $(r, T]$. For the case $(0,r]$, we will make use of the techniques developed in \cite{NourdinViens}, as it is the most comfortable method in order to study the density of the solution $X_t$ when $t \in (0,r]$ because of the structure of the equation in this interval of time.
\subsection{The case $t \in (0,r]$} \label{First case}

\subsubsection{A general bounding technique}
In order to illustrate how to obtain a bound for the density in this case, we will briefly recall the method developed by Nourdin and Viens in \cite{NourdinViens}. The bound relies on the following results:

    \begin{teo} \label{g_F}
        Let $F \in \D^{1,2}$ with zero mean, let $g_F(x)$ be the function defined as
        \[
        g_F(x) = E\left[\langle DF, -DL^{-1}F\rangle_{\mathcal{H}} | F = x\right],
        \]
        where $L$ denotes the Ornstein-Uhlenbeck operator associated to $B^H$. The law of $F$ has a density $p_F(x)$ if and only  if the random variable $g_F(F)$ is strictly positive almost surely. In this case, the support of $p_F$, $supp(p_F$), is a closed interval of $\R$ containing zero and, for almost $x \in supp(p_F)$,
        \[
        p_F(x) = \frac{\E[|F|]}{2g_F(x)} \exp \left(-\int_0^x \frac{z}{g_F(z)}dz\right).
        \]
    \end{teo}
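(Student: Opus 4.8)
The plan is to reduce the whole statement to a single integration-by-parts identity and then solve an elementary first-order ODE for the tail functional of $F$. The cornerstone is the relation
$$\E\big[\phi'(F)\,g_F(F)\big] = \E\big[F\,\phi(F)\big], \qquad \phi \in \mathcal{C}^1_b(\R),$$
which I would obtain as follows. Since $\E[F]=0$, the pseudo-inverse $L^{-1}F$ is well defined and $F = -\delta D L^{-1}F$, where $\delta$ is the divergence adjoint to $D$. Plugging this into $\E[F\phi(F)]$, applying the duality between $\delta$ and $D$ with test random variable $\phi(F)$, and using the chain rule $D(\phi(F)) = \phi'(F)\,DF$ gives
$$\E[F\phi(F)] = \E\big[\langle -DL^{-1}F,\, D\phi(F)\rangle_{\H}\big] = \E\big[\phi'(F)\,\langle DF, -DL^{-1}F\rangle_{\H}\big],$$
and the tower property turns the inner product into $g_F(F)$ by definition of the conditional expectation.

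Next I would record the nonnegativity $g_F \ge 0$, which is what makes the formula meaningful. For this I would use the Mehler representation of the Ornstein–Uhlenbeck semigroup $T_\theta = \e^{\theta L}$: from $-L^{-1} = \int_0^\infty T_\theta \,\d\theta$ on the orthogonal complement of the constants together with the commutation $D T_\theta = \e^{-\theta}\, T_\theta D$, one gets
$$-DL^{-1}F = \int_0^\infty \e^{-\theta}\, T_\theta(DF)\,\d\theta,$$
and Mehler's formula rewrites $T_\theta(DF)$ as an expectation over an independent copy $\tilde B^H$ of $B^H$. Conditioning on $F$ then exhibits $g_F$ as an average that is nonnegative, which also guarantees that the integral $\int_0^x z/g_F(z)\,\d z$ is well posed on the relevant set.

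With the identity in hand I would set $\Phi(x) := \E[F\,\1_{\{F>x\}}]$, which satisfies $\Phi \ge 0$, $\Phi(\pm\infty)=0$ and $\Phi(0) = \E[F^+] = \tfrac12\E[|F|]$ (the last equality using $\E[F]=0$). Approximating $\1_{\{\cdot > x\}}$ by smooth functions $\phi$ whose derivatives concentrate at $x$, the identity yields, whenever the law of $F$ admits a density $p_F$, the relation $g_F(x)\,p_F(x) = \Phi(x)$ for a.e. $x$; since also $\Phi'(x) = -x\,p_F(x)$, eliminating $p_F$ produces the linear ODE $\Phi'(x)/\Phi(x) = -x/g_F(x)$. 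Integrating from $0$ to $x$ and inserting $\Phi(0)=\tfrac12\E[|F|]$ gives exactly the stated closed form for $p_F$. The equivalence ``$F$ has a density $\iff g_F(F)>0$ a.s.'' then closes the argument: the direct implication is read off from $p_F(x)=\Phi(x)/g_F(x)$, while for the converse I would show that $g_F(F)>0$ a.s. forces $\Phi$ to be locally absolutely continuous with the above logarithmic derivative, hence that $F$ is absolutely continuous, with support equal to the interval where $\Phi>0$, a closed interval containing $0$ because $\Phi(0)>0$.

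I expect the main obstacle to be the converse existence direction together with the rigorous justification of the passage to the ODE: one must control the approximation of $\1_{\{\cdot>x\}}$ uniformly in $x$, verify that $\Phi$ is genuinely differentiable a.e. with the claimed derivative, and treat the set $\{g_F=0\}$ carefully so as to \emph{conclude} absolute continuity rather than merely assert an identity valid where a density is presupposed. The positivity lemma $g_F\ge 0$ via Mehler's formula is the other delicate ingredient, since it underpins both the integrability of $z/g_F(z)$ and the characterization of the support.
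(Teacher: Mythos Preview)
The paper does not actually prove this theorem: it is quoted verbatim from Nourdin and Viens \cite{NourdinViens} and used as a black box (the paper only says ``we will briefly recall the method developed by Nourdin and Viens'' and then states the result without proof). So there is no ``paper's own proof'' to compare against.

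Your sketch is essentially the original Nourdin--Viens argument and is correct in outline. The Stein-type identity $\E[F\phi(F)]=\E[\phi'(F)g_F(F)]$ via $F=-\delta DL^{-1}F$ is exactly their starting point; the function $\Phi(x)=\E[F\1_{\{F>x\}}]$ and the ODE $\Phi'/\Phi=-x/g_F(x)$ with initial value $\Phi(0)=\tfrac12\E|F|$ is precisely how they obtain the closed form; and the nonnegativity of $g_F$ via the Mehler representation is their Proposition~3.9. You have also correctly identified the genuinely delicate step, namely the converse implication (from $g_F(F)>0$ a.s.\ to absolute continuity of the law of $F$) and the rigorous passage from the integration-by-parts identity to the pointwise relation $g_F(x)p_F(x)=\Phi(x)$: in the original paper this is handled by showing that the signed measure $\mu(\phi):=\E[\phi'(F)g_F(F)]$ coincides with $\nu(\phi):=\E[F\phi(F)]$, which forces the law of $F$ restricted to $\{g_F>0\}$ to be absolutely continuous, and then arguing that this set has full measure. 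Your plan is sound; just be aware that turning the heuristic ``approximate $\1_{\{\cdot>x\}}$'' into a proof requires exactly the care you flag at the end.
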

This result has the following consequence.
\begin{corol} \label{corol g_F}
    If there exist $\sigma_{min}, \sigma_{max} > 0$ such that
    \[
    \sigma_{min}^2 \leq g_F(F) \leq \sigma^2_{max},
    \]
    then $F$ has  density function satisfying
    \[
    \frac{\E[|F|]}{2\sigma^2_{max}}\exp\left( -\frac{x^2}{2\sigma^2_{min}} \right) \leq p_F(x) \leq \frac{\E[|F|]}{2\sigma^2_{min}}\exp\left( -\frac{x^2}{2\sigma^2_{max}} \right).
    \]
\end{corol}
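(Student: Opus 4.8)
The plan is to apply Theorem \ref{g_F} directly. Since the hypothesis gives $g_F(F) \geq \sigma_{min}^2 > 0$ almost surely, the random variable $g_F(F)$ is strictly positive a.s., so Theorem \ref{g_F} guarantees that $F$ admits a density $p_F$ whose support $\mathrm{supp}(p_F)$ is a closed interval containing zero, together with the explicit representation
\[
p_F(x) = \frac{\E[|F|]}{2 g_F(x)} \exp\left(-\int_0^x \frac{z}{g_F(z)}\,dz\right)
\]
for almost every $x \in \mathrm{supp}(p_F)$. The whole corollary then reduces to controlling the two factors in this formula by means of the two-sided bound on $g_F$.

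The first step, and the one requiring the most care, is to pass from the pointwise-in-$\omega$ bound $\sigma_{min}^2 \leq g_F(F) \leq \sigma_{max}^2$ to the deterministic bound $\sigma_{min}^2 \leq g_F(x) \leq \sigma_{max}^2$ valid for $p_F$-almost every $x$ in the support. I would argue that, since $g_F(F)$ lies in $[\sigma_{min}^2, \sigma_{max}^2]$ with probability one and the law of $F$ is precisely $p_F(x)\,dx$ on its support, the deterministic function $g_F$ must take values in $[\sigma_{min}^2, \sigma_{max}^2]$ for $p_F$-a.e.\ $x$. This is the only genuinely measure-theoretic point; everything afterwards is elementary.

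With the deterministic bound in hand, I would estimate the prefactor and the exponential separately. For the prefactor, monotonicity of $t \mapsto 1/t$ on $(0,\infty)$ gives $\frac{\E[|F|]}{2\sigma_{max}^2} \leq \frac{\E[|F|]}{2 g_F(x)} \leq \frac{\E[|F|]}{2\sigma_{min}^2}$. For the exponent, I would observe that the integrand $z/g_F(z)$ has the same sign as $z$, so that for every $x$ of either sign one has $\frac{x^2}{2\sigma_{max}^2} \leq \int_0^x \frac{z}{g_F(z)}\,dz \leq \frac{x^2}{2\sigma_{min}^2}$; the case $x < 0$ follows by reversing the orientation of the interval of integration and noting that $z/g_F(z) \leq 0$ there. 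Exponentiating the negative of this chain yields $\exp(-\frac{x^2}{2\sigma_{min}^2}) \leq \exp(-\int_0^x \frac{z}{g_F(z)}\,dz) \leq \exp(-\frac{x^2}{2\sigma_{max}^2})$.

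Finally, I would combine the two estimates by pairing extremes. The lower bound for $p_F(x)$ arises from using the smallest admissible prefactor $\frac{\E[|F|]}{2\sigma_{max}^2}$ together with the smallest admissible exponential $\exp(-\frac{x^2}{2\sigma_{min}^2})$, while the upper bound arises from the largest prefactor $\frac{\E[|F|]}{2\sigma_{min}^2}$ with the largest exponential $\exp(-\frac{x^2}{2\sigma_{max}^2})$. This produces exactly the claimed two-sided Gaussian-type bound. I expect no real obstacle beyond the measure-theoretic translation in the first step; the remainder is monotonicity together with an elementary integral estimate.
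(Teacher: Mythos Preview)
Your proposal is correct and is precisely the natural derivation of the corollary from Theorem \ref{g_F}. The paper itself does not supply a proof for this corollary; it simply states it as a consequence of Theorem \ref{g_F} (the result is due to Nourdin and Viens \cite{NourdinViens}), so there is nothing to compare against beyond noting that your argument is the standard one.
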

Even though this result is directly giving us the lower and the upper bound, it is not clear from the definition of $g_F$ how to analyze this function. Indeed, we have to keep in mind that in our case, $F$ will be related to the solution of an SDDE, so computing $DL^{-1}F$ seems, a priori, a difficult task. Concerning our case, let $B^H$ be a fractional Brownian motion with Hurst parameter $H> 1/2$. As an abuse of notation, we will denote by $B^H$ the isonormal process asociated to $\mathcal{H}$. A way of computing $g_F(x)$ is the following (see \cite{NourdinViens}):

\begin{prop}
    Assume $DF = \Phi_F(B^H)$ for a measurable function $\Phi_{F} : \R^{\mathcal{H}} \to \mathcal{H}$. Then, we have
    \[
    \langle DF, -DL^{-1}F \rangle_{\mathcal{H}} = \int_{0}^{\infty} e^{-\theta} \langle \Phi_F(B^H), \E' \left(\Phi_F(e^{-\theta}B^H + \sqrt{1-e^{-2\theta}}B^{H'}) \right)\rangle_{\mathcal{H}} d\theta
    \]
    and, therefore,
    \[
    g_F(F) = \int_{0}^{\infty} e^{-\theta} \mathbf{E}\left(\langle \Phi_Z(B^H), \Phi_Z(e^{-\theta}B^H + \sqrt{1-e^{-2\theta}}B^{H'}) \rangle_{\mathcal{H}}|F\right) d\theta,
    \]
    where $B^{H'}$ stands for an independent copy of $B^H$ such that $B^H$ and $B^{H'}$ are defined in the probability space $(\Omega \times \Omega', \mathcal{F} \otimes \mathcal{F}', P \times P')$, $\E'$ denotes the expectation with respect to $P'$ and $\mathbf{E}$ denotes the expectation with respect to $P \times P'$.
\end{prop}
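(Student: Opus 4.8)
The plan is to combine the spectral representation of the inverse Ornstein--Uhlenbeck operator with Mehler's formula for the OU semigroup. First I would recall that, since $F$ is centered, the operator $L^{-1}$ admits the integral representation $L^{-1}F = -\int_0^\infty P_\theta F \, d\theta$, where $\{P_\theta; \theta \geq 0\}$ denotes the Ornstein--Uhlenbeck semigroup associated to $B^H$. Combined with the intertwining relation $D P_\theta = e^{-\theta} P_\theta D$, this yields
\[
-DL^{-1}F = \int_0^\infty e^{-\theta} P_\theta(DF)\, d\theta,
\]
understood as an $\H$-valued Bochner integral; the exponential weight $e^{-\theta}$ guarantees convergence for $F \in \D^{1,2}$.

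Next I would invoke Mehler's formula, which represents the action of $P_\theta$ on a functional $G = \Psi(B^H)$ through an independent copy $B^{H'}$ of $B^H$, namely $P_\theta G = \E'[\Psi(e^{-\theta}B^H + \sqrt{1-e^{-2\theta}}B^{H'})]$. Applying this to $DF = \Phi_F(B^H)$ gives
\[
P_\theta(DF) = \E'\left[\Phi_F(e^{-\theta}B^H + \sqrt{1-e^{-2\theta}}B^{H'})\right].
\]
Inserting this into the previous display and pairing with $DF = \Phi_F(B^H)$, the linearity of the inner product $\langle \cdot, \cdot\rangle_\H$ together with a Fubini argument (justified by the integrability coming from $F\in\D^{1,2}$ and the $e^{-\theta}$ factor) lets me pull the $\theta$-integral and the $\E'$ outside the pairing, producing the first displayed identity of the statement.

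Finally, to obtain the expression for $g_F(F)$, I would take the conditional expectation given $F$ of the identity just proved. Since $F$ is $\sigma(B^H)$-measurable while $B^{H'}$ is independent of $B^H$, the inner integrand may be viewed as a functional on the product space $(\Omega\times\Omega', \F\otimes\F', P\times P')$, and $\E'$ combined with the conditional expectation given $F$ becomes the conditional expectation $\mathbf{E}[\,\cdot\mid F]$ with respect to $P\times P'$. Interchanging this conditional expectation with the $\theta$-integral once more by Fubini yields the stated formula for $g_F(F)$.

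The main obstacle is the rigorous justification of the two interchanges of integration: bringing the inner product and the conditional expectation inside the $\theta$-integral. Both rest on establishing uniform integrability in $\theta$, for which the decaying factor $e^{-\theta}$ and the boundedness of the operators $P_\theta$ on the relevant $L^p$ spaces are the key ingredients. A secondary technical point is checking that $\Phi_F$ is genuinely well defined as a measurable map so that Mehler's formula applies; this is where the hypothesis $DF = \Phi_F(B^H)$ is essential.
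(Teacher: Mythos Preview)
The paper does not actually give a proof of this proposition: it is stated with the parenthetical reference ``(see \cite{NourdinViens})'' and no argument is supplied. Your sketch is therefore strictly more than what the paper offers, and it is the standard argument from that reference---the integral representation $-L^{-1}F=\int_0^\infty P_\theta F\,d\theta$, the commutation $DP_\theta=e^{-\theta}P_\theta D$, and Mehler's formula are exactly the ingredients Nourdin and Viens use. Your identification of the Fubini-type interchanges as the only delicate points is accurate, and these are handled in the cited work.
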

We will rely heavily on this way of computing $g_F$ in order to prove the following bounds for $t\in (0,r]$:
\begin{prop} \label{density in (0,r]}
    Let $X$ be the solution \eqref{e: main equation} with $\sigma, b$ satisfying the hypothesis of the Theorem \ref{t: main result}. If $\sigma$ is $(\lambda, \Lambda)$-elliptic, then, for every $t \in (0,r]$, there exist constants $c_1 < c_2$ such that the density $p_t(x)$ of $X_t$ satisfies:
    \[
    \begin{split}
\frac{c_1\E[|X_t-m_t|]}{2\Lambda^2 t^{2H}}\exp\left( -\frac{(x-m_t)^2}{2\lambda^2t^{2H}}\right) \leq &p_t(x) \\
\leq &\frac{c_2\E[|X_t - m_t|]}{2\lambda^2 t^{2H}}\exp\left( -\frac{(x-m_t)^2}{2\Lambda^2 t^{2H}}\right),
\end{split}
    \]
    where $m_t = \E[X_t]$.
\end{prop}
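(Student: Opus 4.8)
The plan is to apply Corollary \ref{corol g_F} to the centred variable $F := X_t - m_t$, which has zero mean and belongs to $\D^{1,2}$; since the law of $X_t$ is the translate by $m_t$ of the law of $F$, any two-sided Gaussian bound on $p_F$ transfers immediately to $p_t$ after replacing $x$ by $x-m_t$. Thus everything reduces to producing deterministic constants $\sigma_{min}, \sigma_{max}>0$ with $\sigma_{min}^2 \le g_F(F) \le \sigma_{max}^2$ almost surely, and I would obtain these through the representation of $g_F(F)$ as the $\theta$-integral against $e^{-\theta}$ of the conditional expectation of $\langle \Phi_F(B^H), \Phi_F(e^{-\theta}B^H+\sqrt{1-e^{-2\theta}}B^{H'})\rangle_{\mathcal{H}}$ furnished by the Proposition preceding the statement. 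The point is that a \emph{uniform} (in $\theta$ and in $\omega$) two-sided bound on this inner product, integrated against $\int_0^\infty e^{-\theta}\d\theta = 1$, yields the required bound on $g_F(F)$.

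The structural feature that makes the interval $(0,r]$ tractable is that for $t\in(0,r]$ and $s\in(0,t]$ one has $s-r\le 0$, so $X_{s-r}=\eta_{s-r}$ is \emph{deterministic}; equation \eqref{e: main equation} reads $X_t=\eta_0+\int_0^t \sigma(\eta_{s-r})\,dB^H_s+\int_0^t b(X_s)\,ds$ with a deterministic diffusion coefficient. Differentiating in the Malliavin sense then gives the linear equation $D_u X_t = \sigma(\eta_{u-r})\1_{[0,t]}(u) + \int_u^t b'(X_s)\,D_u X_s\,ds$, whose explicit solution is
\[
D_u X_t = \sigma(\eta_{u-r})\,\exp\Bigl(\int_u^t b'(X_s)\,ds\Bigr)\1_{[0,t]}(u).
\]
Hence $\Phi_F(B^H)_u = DF_u = D_u X_t$ depends on the driving path \emph{only} through the drift exponential, while the factor $\sigma(\eta_{u-r})$ is path-independent. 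This is exactly what I need, because it forces the same structure on $\Phi_F$ evaluated at the randomized path $e^{-\theta}B^H+\sqrt{1-e^{-2\theta}}B^{H'}$: the $\sigma$-factor is unchanged and only the exponential is affected.

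With $K:=\|b'\|_\infty<\infty$ (from $b\in\mathcal{C}^\infty_b$) each drift exponential lies in $[e^{-Kr},e^{Kr}]$, and from $(\lambda,\Lambda)$-ellipticity $\sigma$ never vanishes, so it keeps a constant sign and $\lambda\le|\sigma|\le\Lambda$ pointwise. Writing the inner product as $\alpha_H\int_0^t\int_0^t \Phi_F(B^H)_u\,\Phi_F(\tilde B^H)_v\,|u-v|^{2H-2}\,du\,dv$, the integrand factorizes into (sign-definite $\sigma$ factors)$\times$(positive exponentials)$\times$(positive kernel), so bounding each factor and using the normalization $\alpha_H\int_0^t\int_0^t|u-v|^{2H-2}\,du\,dv=\|\1_{[0,t]}\|^2_{\mathcal{H}}=t^{2H}$ gives, uniformly in $\theta$ and $\omega$,
\[
\lambda^2 e^{-2Kr}\,t^{2H}\ \le\ \langle \Phi_F(B^H),\Phi_F(\tilde B^H)\rangle_{\mathcal{H}}\ \le\ \Lambda^2 e^{2Kr}\,t^{2H}.
\]
Taking conditional expectation and integrating against $e^{-\theta}$ yields the same bounds for $g_F(F)$, so Corollary \ref{corol g_F} applies with $\sigma_{min}^2=\lambda^2 e^{-2Kr}t^{2H}$ and $\sigma_{max}^2=\Lambda^2 e^{2Kr}t^{2H}$; the drift factors $e^{\pm 2Kr}$, together with $\alpha_H$ and $H$, are then absorbed into the constants $c_1<c_2$ of the statement.

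The main obstacle is not the final estimate but establishing cleanly the representation $DF=\Phi_F(B^H)$ for a measurable $\Phi_F$ and the legitimacy of evaluating it along the independent copy, i.e. verifying that the solution map $B^H\mapsto X$ (and hence $\Phi_F$) is well-defined and measurable on the product space and that $X_t\in\D^{1,2}$ with the derivative formula above; the explicit linear form of $D_u X_t$, valid precisely because the diffusion coefficient is deterministic on $(0,r]$, is what removes any path-dependence of the $\sigma$-factor and so guarantees that the two-sided bound is genuinely uniform in $\theta$ and $\omega$. The care needed is in confirming the constant sign of $\sigma$ and that the same deterministic-coefficient structure persists for the randomized solution $\tilde X$, since $\sigma(\eta_{\,\cdot-r})$ does not see the driving noise at all on this interval.
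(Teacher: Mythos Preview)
Your proposal is correct and follows essentially the same route as the paper: apply the Nourdin--Viens representation to the centred variable $F=X_t-m_t$, exploit that on $(0,r]$ the diffusion coefficient $\sigma(\eta_{\cdot-r})$ is deterministic to obtain the explicit formula $D_uX_t=\sigma(\eta_{u-r})\exp(\int_u^t b'(X_s)\,ds)$, derive the uniform two-sided bound $\lambda e^{-Kr}\le |D_uX_t|\le \Lambda e^{Kr}$, and plug into the $g_F(F)$ integral to get $c_1\lambda^2 t^{2H}\le g_F(F)\le c_2\Lambda^2 t^{2H}$ before invoking Corollary~\ref{corol g_F}. If anything, your write-up is slightly more careful than the paper's in flagging the constant-sign issue for $\sigma$ and the measurability of $\Phi_F$, but the argument is the same.
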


\begin{proof}
    We will apply the method recently introduced. Notice that for $t \in (0,r]$, $X_t$ solves the equation
    \[
    X_t = \eta_0 + \int_0^t \sigma(\eta(s-r))dB^H_s + \int_0^t b(X_s) ds.
    \]
    Hence, the Malliavin derivative of $X_t$ in the direction $s < t$ satisfies the following equation:
    \[
    D_sX_t =  \sigma(\eta(s-r)) + \int_0^t \sigma'(\eta(u-r)) D_s\eta(u-r) dB^H_u + \int_0^t b'(X_u) D_sX_u du.
    \]
    Since $\eta$ is deterministic, $D_s\eta(u-r) = 0$, so the equation satisfied by $D_sX_t$ is reduced to
    \[
    D_sX_t = \sigma(\eta(s-r)) + \int_0^t b'(X_u)D_sX_u du.
    \]
    This is an ODE with initial condition $\sigma(\eta(s-r))$, so the explicit solution to this equation is
    \[
    D_sX_t = \sigma(\eta(s-r)) \exp\left( \int_s^t b'(X_u) du \right).
    \]
    Moreover, from the fact that $|b'(X_u)| \leq ||b'||_{\infty}$ and the ellipticity condition on $\sigma$ we deduce that there exists $M > 0$ such that
    \begin{equation} \label{der0r}
    \lambda e^{-Mr} \leq D_sX_t \leq \Lambda e^{Mr}.
    \end{equation}
    The important conclusion of this bound is that \eqref{der0r} holds uniformly with respect to $\omega \in \Omega$. Notice that one has that $X_t \in \D^{1,2}$, but the method works for centered random variables. In order to apply the bounding technique, we will apply the method to $F = X_t - \E(X_t)$. Since $\E(X_t)$ is a real number, $DF = DX_t$. Moreover, recall that if $\phy, \psi \in \mathcal{H}$, then
    \[
    \langle \phy, \psi\rangle_{\mathcal{H}} = \int_0^T \int_0^T \phy_u\psi_v|u-v|^{2H-2} du dv.
    \]
    Hence, if we define $F^{\theta} := F(e^{-\theta}\omega + \sqrt{1-e^{-2\theta}}\omega')$ we can write $g_F$ as
    \[
    g_F(F) = \int_{0}^{\infty} e^{-\theta} \E \left[\E' \left(\int_0^t \int_0^t D_uX_t (D_vX_t)^{\theta} |u-v|^{2H-2} du dv | F \right)\right] d\theta
    \]
    using the bounds \eqref{der0r} we can easily find that there exist $c_1, c_2 > 0$ such that
    \[
   c_1 \lambda^2 t^{2H} \leq g_F(F) \leq c_2 \Lambda^2 t^{2H}
    \]
    which finishes the proof.
\end{proof}
 Notice that renaming the constants, we derive the bound \ref{e: bound main result 0r}. Some comments about this method is that, as the reader can observe, it is extremely comfortable to study Stochastic Differential Equations driven by an additive noise and, under suitable hypothesis on $\sigma$, the same arguments work for equations of the type \eqref{e: main equation} for $t \in (0,r]$. The fact that for $t > r$ the difussion coefficient is random makes it impossible to keep applying this same method.

\subsection{The case $t > r$} \label{second case}
As mentioned, a natural approach to face the case $t > r$ is continuing with the same approach as in the case $t \in (0,r]$. However, the structure of the equation forces us to look for another approach. 

The strategy in order to get the lower bound for $p_t(x)$ is inspired in the strategy found in \cite{kohatsu2003lower} and adapted to our context. In \cite{besalu2016gaussian} can be found an application of the method to general equations driven by a fractional Brownian motion with $H > 1/2$. One big difference between this method and the one used for the case $t \in (0,r]$ is that the latter method produces as well an upper bound for $p_t(x)$. The method that we will use will only give us a lower bound, but it can be complemented by an upper bound using similar arguments as in \cite{baudoin:hal-00931118} but adapted to the delay setting.

The proof of the lower bound we give in this paper is long and technical. One of the main objectives of this work is illustrate how the delays are actually helpful in order to get simpler proofs. 

The first step in order to work with this method is representing the density function $p_t(x)$ as $\E[\delta_x(X_t)]$. To do so, we rely on the following result.
\begin{teo} \label{representation}
    Under the hypothesis of Theorem \ref{t: main result}, the unique solution to \eqref{e: main equation} is a non-degenerate random variable in the sense of Malliavin for all $t \in (0,T]$, that is,
    \begin{itemize}
        \item [(i)] $X_t \in \D^{\infty}$.
        \item [(ii)] $\Gamma^{-1}_{X_t}  > 0$ almost surely, and $\Gamma^{-1}_{X_t}\in \bigcap_{p\geq 1}L^p(\Omega)$.
    \end{itemize}
    Moreover, the density $p_t(x)$ of $X_t$ admits the representation $p_t(x) = \E[\delta_x(X_t)]$ where $\delta_x$ denotes the Dirac's delta measure at $x$.
\end{teo}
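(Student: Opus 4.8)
The plan is to split the three assertions into the smoothness part, (i)--(ii), and the representation $p_t(x)=\E[\delta_x(X_t)]$. For (i) and (ii) the cleanest route is to exploit the delay structure via an induction over the consecutive windows $[0,r],[r,2r],\dots$, combined with the ellipticity of $\sigma$; alternatively, both are already implicit in the smoothness statements of \cite{ferrante2006stochastic} and \cite{leon:hal-00440655}, since the Malliavin--Watanabe criterion shows that a $C^\infty$ density for $X_t$ is equivalent to $X_t\in\D^\infty$ together with $\E[\Gamma_{X_t}^{-p}]<\infty$ for every $p\geq 1$. I would nevertheless sketch the direct verification, because the delay makes it transparent.

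For (i), I would first treat $t\in(0,r]$, where the diffusion coefficient $\sigma(\eta(s-r))$ is deterministic; there $X_t$ solves an equation with additive fractional noise and smooth drift $b\in\mathcal{C}^\infty_b$, so repeated differentiation together with the a priori $L^p$ estimates for Young/fBm-driven equations give $X_t\in\D^{k,p}$ for all $k,p$. Then I would proceed inductively: on $(nr,(n+1)r]$ the process $X_{s-r}$ is already known to lie in $\D^\infty$ with moments of all orders from the previous window, so each Malliavin derivative $D^{(k)}X_t$ satisfies a linear equation whose coefficients are polynomials in the derivatives of $\sigma,b$ (bounded, since $\sigma,b\in\mathcal{C}^\infty_b$) evaluated along $X$ and along lower-order derivatives. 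Gronwall-type estimates for these linear fBm equations propagate the $L^p$ bounds, yielding $X_t\in\D^\infty$.

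For (ii) the key is a lower bound for $\Gamma_{X_t}=\|DX_t\|^2_{\mathcal{H}}$. When $t\in(0,r]$ this is immediate from Proposition \ref{density in (0,r]}: the explicit formula $D_sX_t=\sigma(\eta(s-r))\exp(\int_s^t b'(X_u)\,du)$ gives $|D_sX_t|\geq\lambda e^{-Mr}$ with constant sign, whence $\Gamma_{X_t}\geq c\,t^{2H}$ deterministically and $\Gamma^{-1}_{X_t}$ is bounded. For $t>r$ the delay again helps: for $s\in(t-r,t)$ one has $D_sX_{u-r}=0$ for every $u\in(s,t)$ (since $u<t<s+r$), so the stochastic term in the equation for $D_sX_t$ drops out and $D_sX_t=\sigma(X_{s-r})\exp(\int_s^t b'(X_u)\,du)$, again bounded below in absolute value by $\lambda e^{-Mr}$, uniformly in $\omega$, on this last window. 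The remaining difficulty, which I expect to be \emph{the main obstacle}, is that on $(0,t-r)$ the derivative need not be sign definite, so one cannot simply restrict the (non-monotone) inner product of $\mathcal{H}$ to $(t-r,t)$ because of the negative cross terms. I would handle this by localization near the diagonal: choosing $\delta$ small and using the Hölder continuity of $s\mapsto D_sX_t$ (whose Hölder constant has moments of all orders by (i)) to keep $D_sX_t$ within $\lambda/2$ of $\sigma(X_{t-r})$, hence of constant sign, on $[t-\delta,t]$, and then estimating $P(\Gamma_{X_t}<\epsilon)\leq C_q\epsilon^q$ for every $q$ along the lines of the Kohatsu-Higa/Besalú--Rovira machinery of \cite{kohatsu2003lower,besalu2016gaussian}; this gives $\Gamma^{-1}_{X_t}\in\bigcap_{p\geq 1}L^p(\Omega)$.

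Finally, for the representation I would use that, $X_t$ being non-degenerate and in $\D^\infty$, the composition $\delta_x(X_t)$ is a well-defined Watanabe distribution. Taking a sequence $g_k$ of smooth functions approximating $\delta_x$ as in Remark \ref{remark 1}, on one hand $\E[g_k(X_t)]=\int g_k(y)p_t(y)\,dy\to p_t(x)$ by continuity of the smooth density, and on the other hand $\E[g_k(X_t)]\to\E[\delta_x(X_t)]$ by the very definition of the pairing of $\delta_x(X_t)$ with $1\in\D^\infty$, so $p_t(x)=\E[\delta_x(X_t)]$. Equivalently, the same identity follows from the integration-by-parts formula \eqref{e: IBP} applied with $n=1$ and $G=1$ together with Remark \ref{remark 1}.
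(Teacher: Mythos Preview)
Your proposal is correct and substantially more detailed than the paper's own proof, which consists of two sentences: items (i) and (ii) are deferred wholesale to \cite{ferrante2006stochastic}, and the representation $p_t(x)=\E[\delta_x(X_t)]$ follows ``by the same argument as in \cite{kohatsu2003lower}''. You anticipate this shortcut yourself when you note that (i)--(ii) are already implicit in \cite{ferrante2006stochastic} and \cite{leon:hal-00440655}, so at that level the two approaches coincide.

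Where you go further is in sketching the window-by-window induction and, more interestingly, in isolating a genuine technical point for (ii): because the $\mathcal{H}$-inner product for $H>1/2$ has a positive but non-local kernel, the clean pathwise lower bound $|D_sX_t|\geq\lambda e^{-Mr}$ on $(t-r,t)$ does not by itself yield $\Gamma_{X_t}\geq c$ once $D_sX_t$ may change sign on $(0,t-r)$, since the off-diagonal cross terms can be negative. Your fix---localize near $t$, use H\"older regularity of $s\mapsto D_sX_t$ to keep the sign, and derive the small-ball estimate $P(\Gamma_{X_t}<\epsilon)\leq C_q\epsilon^q$---is the standard route in the fBm literature and gives the negative moments. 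For the representation, your Watanabe-distribution argument via mollification of $\delta_x$ is exactly the content of the Kohatsu-Higa reference the paper invokes. In short: the paper cites, you prove; both are valid, and your version has the advantage of being self-contained and of making explicit where the delay structure does the work.
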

\begin{proof}
Items $(i)$ and $(ii)$ are proved in \cite{ferrante2006stochastic}. Hence, by the same argument as in \cite{kohatsu2003lower} we obtain that the density $p_t(x)$ of $X_t$ can be expressed as
\[
p_t(x) = \E[\delta_{x}(X_t)],
\]
as desired.
\end{proof}

In order to analyze $p_t(x) = \E[\delta_x(X_t)]$, we will construct an approximating sequence $\{F_n; 0 \leq n \leq N\}$ such that $F_N = X_t$ and we will evaluate $p_t(x)$ via evaluating the conditional densities of every $F_n$.  By construction, analyzing $p_t(x)$ is equivalent to analyzing $\E[\delta_x(F_N)]$ thanks to the representation given by Theorem \ref{representation}. In order to construct such an approximating sequence, we will construct a partition $\pi = \{0 = t_0 < t_1 < \cdots < t_N = t\}$ such that $|t_n - t_{n-1}| < \eps$ for  a small enough $\eps > 0$ that will be chosen conveniently in the future. Then, $F_n$ will be an $\mathcal{F}_{t_n}$-measurable random variable. In order to exploit the property of the delay, we will consider $F_{n}$ of the form
\[
F_n = F_{n-1} + I_n + R_n
\]
where $I_n$ is a stochastic integral of a predictable process and $R_n$ is the remainder term. In the case of a general Stochastic Differential Equation, the choice of $I_n$ is not direct. In fact, choosing $I_n$ as the integral of a predictable process makes the term $R_n$ to be difficult to study. Thanks to the delays, our choice of $I_n$ is straightforward and natural. We therefore define
\[
F_{n-1} = \eta_0 + \int_0^{t_{n-1}} \sigma(X_{s-r})dB^H_{s} + \int_0^{t_{n-1}}b(X_s)ds,
\]
\[
I_n = \int_{t_{n-1}}^{t_n}  \sigma(X_{s-r})dB^H_s,
\]
and
\[
R_n = \int_{t_{n-1}}^{t_n} b(X_s) ds.
\]
This allows us to write
\[
p_t(x) = \E[\delta_x(F_N)] = \E[\delta_x(F_{N-1} + I_{N} + R_N)]
\]
which, by the properties of the conditional expectation, this last expression can also be written as
\[
p_t(x) = \E[ E_{t_{N-1}}(\delta_x(F_{N-1} + I_{N} + R_N))].
\]
Therefore, in order to get information about $p_t(x)$ we first need to get as much information as we can about $E_{t_{N-1}}[\delta_x(F_{N-1} + I_{N} + R_N)]$. Using a Taylor expansion, this term can be written as
\[
\begin{split}
E_{t_{N-1}}[\delta_x(F_{N-1} + I_{N} + R_N)] = & E_{t_{N-1}}[\delta_x(F_{N-1} + I_N)]  \\
+ &E_{t_{N-1}}\left[\int_0^1 \frac{d}{d\rho}\delta_x(F_{N-1} + I_N + \rho R_N) R_n d\rho \right]
\end{split}
\]
where the derivative in the third term has to be understood as the second order derivative in the distributional sense of $\1_{\{F_{N-1} + I_N + \rho R_N > x\}}$. Inspired by this decomposition, we write for all $1 \leq n \leq N$,

\begin{equation} \label{J}
J_{1,n} = E_{t_{n-1}}[\delta_x(F_{n-1} + I_n)], \quad J_{2,n} = E_{t_{n-1}}\left[\int_0^1 \frac{d}{d\rho}\delta_x(F_{n-1} + I_n + \rho R_n) R_n d\rho \right].
\end{equation}
So a first (and natural) lower bound is
\[
E_{t_{n-1}}[\delta_x(F_{n-1}+I_n + R_n)] = J_{1,n} + J_{2,n} \geq J_{1.n} - |J_{2,n}|.
\]
Hence, our most immediate objective is to give a lower bound for $J_{1,n}$ and an upper bound for $|J_{2,n}|$.
\subsubsection{Lower bound for $J_{1,n}$} \label{bound J1n}
The key in order to give a lower bound for $J_{1,n}$ is the following result:

\begin{prop} \label{fita inf J_1}
    Let $J_{1,n}$ be defined as \eqref{J}. Then, if the partition $\pi$ is such that $\sup_{0 \leq n \leq N}|t_{n+1} - t_n |< \eps < r$ and $|t_n - t_{n-1}|^{2H} = \sigma^2_N := \alpha_H\frac{t^{2H}}{N} $, then
    \[
    J_{1,n}\geq \frac{1}{\sqrt{2\pi \Lambda^2 \sigma^2_N}} \exp \left( - \frac{(x-F_{n-1})^2}{2 \lambda^2 \sigma^2_N} \right).
    \]
\end{prop}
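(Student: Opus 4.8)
The plan is to read $J_{1,n}$ as an \emph{exact} conditional Gaussian density, the delay being precisely the feature that makes this possible. The first step is a measurability observation: if the mesh of $\pi$ satisfies $\sup_{n}|t_n-t_{n-1}|<\eps<r$, then every $s\in[t_{n-1},t_n]$ obeys $s-r\le t_n-r\le t_{n-1}$, so $X_{s-r}$ is $\mathcal{F}_{t_{n-1}}$-measurable and hence $\sigma(X_{s-r})$ is, conditionally on $\mathcal{F}_{t_{n-1}}$, a deterministic function of $s$. This is exactly the structural gain coming from the delay: whereas the integrand $\sigma(X_s)$ of a generic SDE carries randomness from the whole interval, the integrand of $I_n$ carries none beyond time $t_{n-1}$.

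As a consequence, conditionally on $\mathcal{F}_{t_{n-1}}$ the random variable $I_n=\int_{t_{n-1}}^{t_n}\sigma(X_{s-r})\,dB^H_s$ is a linear functional of the Gaussian process $B^H$ with a (conditionally) deterministic kernel, hence conditionally Gaussian; and since $F_{n-1}=X_{t_{n-1}}$ is $\mathcal{F}_{t_{n-1}}$-measurable, $F_{n-1}+I_n$ is conditionally Gaussian as well. Using the representation $\delta_x(y)=\frac{1}{2\pi}\int_{\R}e^{i\xi(y-x)}\,d\xi$ together with the conditional characteristic function of a Gaussian, one obtains
\[
J_{1,n}=\frac{1}{\sqrt{2\pi v_n}}\exp\left(-\frac{(x-F_{n-1}-\mu_n)^2}{2v_n}\right),
\]
where $\mu_n=E_{t_{n-1}}[I_n]$ and $v_n$ is the conditional variance of $I_n$. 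Thus the whole problem reduces to controlling $\mu_n$ and $v_n$.

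For the variance I would compute the conditional Malliavin derivative. Since $\sigma(X_{s-r})$ is $\mathcal{F}_{t_{n-1}}$-measurable, $D_\theta I_n=\sigma(X_{\theta-r})\mathbf{1}_{[t_{n-1},t_n]}(\theta)$ for $\theta\in[t_{n-1},T]$, whence
\[
\Gamma_{I_n,t_{n-1}}=\|DI_n\|^2_{\mathcal{H}[t_{n-1},t_n]}=\alpha_H\int_{t_{n-1}}^{t_n}\int_{t_{n-1}}^{t_n}\sigma(X_{u-r})\sigma(X_{v-r})\,|u-v|^{2H-2}\,du\,dv.
\]
The ellipticity hypothesis forces $\sigma$ to keep a constant sign, so $\lambda^2\le\sigma(X_{u-r})\sigma(X_{v-r})\le\Lambda^2$; together with the elementary identity $\alpha_H\int_{t_{n-1}}^{t_n}\int_{t_{n-1}}^{t_n}|u-v|^{2H-2}\,du\,dv=|t_n-t_{n-1}|^{2H}=\sigma_N^2$ this gives the clean two-sided estimate $\lambda^2\sigma_N^2\le\Gamma_{I_n,t_{n-1}}\le\Lambda^2\sigma_N^2$. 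The upper bound already controls the prefactor, since conditioning cannot increase variance and so $v_n\le\Gamma_{I_n,t_{n-1}}\le\Lambda^2\sigma_N^2$, which pins the constants at $\lambda^2$ and $\Lambda^2$.

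The hard part will be exactly the two features in which the fractional noise departs from the Brownian setting underlying Kohatsu-Higa's method: the lower bound $v_n\ge\lambda^2\sigma_N^2$ and the treatment of $\mu_n$. Because $B^H$ is not a martingale, $\mu_n$ is genuinely nonzero and $v_n$ is strictly below the unconditional variance $\Gamma_{I_n,t_{n-1}}$, so neither point is automatic. For the variance I would invoke the strong local nondeterminism of $B^H$, which ensures that conditioning on the whole past $\mathcal{F}_{t_{n-1}}$ still leaves a fluctuation of order $(t_n-t_{n-1})^{2H}=\sigma_N^2$, yielding a lower bound of the required order $\lambda^2\sigma_N^2$ once $\lambda\le|\sigma|$ and the delay normalisation are used; matching the precise constant $\lambda^2$ is part of what must then be checked. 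For the mean I would absorb $\mu_n$ into the deterministic centre, that is, replace $F_{n-1}$ by $F_{n-1}+\mu_n$ and propagate this correction through the later estimates for $J_{2,n}$ and the chaining step, using that $|\mu_n|$ is bounded in terms of $\|\sigma\|_\infty$ and the short-interval prediction kernel of $B^H$. Substituting $v_n\in[\lambda^2\sigma_N^2,\Lambda^2\sigma_N^2]$ and the centred mean into the explicit conditional density above then gives the stated lower bound.
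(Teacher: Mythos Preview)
Your opening moves coincide with the paper's: the mesh condition $t_n-t_{n-1}<r$ forces $\sigma(X_{s-r})$ to be $\mathcal{F}_{t_{n-1}}$-measurable on $[t_{n-1},t_n]$, so $F_{n-1}+I_n$ is conditionally Gaussian. From that point, however, the paper proceeds far more directly than you do. It simply asserts that, conditional on $\mathcal{F}_{t_{n-1}}$, the law of $F_{n-1}+I_n$ is Gaussian with mean $F_{n-1}$ and variance exactly $\|\sigma(X_{\cdot-r})\|^2_{\mathcal{H}[t_{n-1},t_n]}$; then the ellipticity $\lambda\le|\sigma|\le\Lambda$ and the identity $\alpha_H\int_{t_{n-1}}^{t_n}\int_{t_{n-1}}^{t_n}|u-v|^{2H-2}\,du\,dv=|t_n-t_{n-1}|^{2H}=\sigma_N^2$ give $\lambda^2\sigma_N^2\le\|\sigma(X_{\cdot-r})\|^2_{\mathcal{H}[t_{n-1},t_n]}\le\Lambda^2\sigma_N^2$, and the stated lower bound drops out of the explicit Gaussian density. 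That is the whole proof.

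In particular the paper does \emph{not} separate out a conditional mean $\mu_n$ or distinguish the conditional variance $v_n$ from the quantity $\Gamma_{I_n,t_{n-1}}=\|\sigma(X_{\cdot-r})\|^2_{\mathcal{H}[t_{n-1},t_n]}$; it takes $\mu_n=0$ and $v_n=\Gamma_{I_n,t_{n-1}}$ as given and never invokes local nondeterminism. Your additional layer---treating $\mu_n$ as genuinely nonzero, bounding $v_n$ from below via local nondeterminism, and absorbing $\mu_n$ into the centre---is therefore foreign to the paper's argument. It also would not reproduce the proposition as stated: shifting the centre to $F_{n-1}+\mu_n$ changes the exponent, and a local-nondeterminism bound typically yields $v_n\ge c\,\lambda^2\sigma_N^2$ with some $c<1$, not the clean $\lambda^2\sigma_N^2$ in the statement.

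Your concern that the non-martingale nature of $B^H$ makes $E_{t_{n-1}}[I_n]\neq0$ and $v_n<\Gamma_{I_n,t_{n-1}}$ is a reasonable objection in principle, but the paper---following the line of \cite{besalu2016gaussian}---does not engage with it; it treats the Wiener-type integral of an $\mathcal{F}_{t_{n-1}}$-measurable integrand against $dB^H$ over $[t_{n-1},t_n]$ as conditionally centred with the full $\mathcal{H}[t_{n-1},t_n]$-variance. So to match the paper's proof you should drop the extra machinery and write down the conditional Gaussian density with those parameters directly.
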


\begin{proof}
    Notice that the term $J_{1,n}$ is the conditional density of $F_{n-1} + I_n$ with respect to $\mathcal{F}_{t_n}$. Hence, in order to get a lower bound for $J_{1,n}$, we need to find and bound this conditional density.
    
    On the one hand, $F_{n-1}$ is $\mathcal{F}_{t_{n-1}}$-measurable, so it behaves like a constant when we condition it with respect to $\mathcal{F}_{t_{n-1}}$. From the fact that $t_{n} - t_{n-1}  < r$, we have that $\sigma(X_{s-r})$ is $\mathcal{F}_{t_{n-1}}$ measurable for all $s \in [t_{n-1}, t_n]$ and therefore, the law of $F_{n-1} + I_n$, when conditioned to $\mathcal{F}_{t_{n-1}}$, follows a Gaussian distribution with mean $F_{n-1}$ and variance $||\sigma(X_{\cdot - r})||^2_{\mathcal{H}[t_{n-1}, t_n]}$. Moreover, from the ellipticity condition on $\sigma$ and the definition of $\sigma^2_N$ we readily find that
    \[
    \lambda^2 \sigma^2_N \leq ||\sigma(X_{\cdot - r})||^2_{\mathcal{H}[t_{n-1}, t_n]} \leq \Lambda^2 \sigma^2_N.
    \]
    This allows us to conclude that
    \[
    J_{1,n}\geq \frac{1}{\sqrt{2\pi \Lambda^2 \sigma^2_N}} \exp \left( - \frac{(x-F_{n-1})^2}{2 \lambda^2 \sigma^2_N} \right),
    \]
    as wanted.
\end{proof}

\begin{rmk}
    It is still left to check the fact that such a partition defined as in Proposition \ref{bound J1n} exists. However, proving that there exists a unique partition $\pi = \{0 = t_0 < t_1 < \cdots < t_N = t\}$ of the interval $[0,t]$ with the property that $|t_n - t_{n-1}|^{2H} = \sigma^2_N := \alpha_H \frac{t^{2H}}{N}$ is direct.
\end{rmk}
We will see in the future that the parameter $N$ can be chosen arbitrarily large, so we can construct the partition $\pi$ such that $\sup_{1\leq n \leq N}|t_n - t_{n-1}| < \eps < r$, that is what is needed in order to prove Proposition \ref{bound J1n}.

\subsubsection{Upper bound for $|J_{2,n}|$} \label{bound J2n}
The result concerning the bound for $|J_{2,n}|$ is the follwing.
\begin{prop} \label{bound J_2n a priori}
    Let $J_{2,n}$ be defined as in \eqref{J}. There exist two constants $C, \gamma > 0$ such that
    \begin{equation} \label{e: bound J_2n a priori}
    |J_{2,n}| \leq C\frac{N^{-\gamma}}{\sigma_N}.
    \end{equation}
\end{prop}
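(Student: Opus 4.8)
The plan is to strip the two distributional derivatives off the Dirac mass by means of the conditional integration-by-parts formula of Proposition \ref{IBP}, and then to estimate the resulting weight through the quantitative bound \eqref{IBP estimate}, using the rescaling identity of Remark \ref{remark 2} to capture the correct order in $\sigma_N$.

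First I would read off the structure of $J_{2,n}$: by \eqref{J} and the remark following it, the integrand is the second distributional derivative of $\1_{\{\cdot>x\}}$, evaluated at $F_{n-1}+I_n+\rho R_n$ and tested against a weight proportional to $R_n$. I would apply Proposition \ref{IBP} with $n=2$, conditioning level $s=t_{n-1}$, smooth function $g=\1_{\{\cdot>x\}}$ (legitimate by Remark \ref{remark 1}), $\mathcal{F}_{t_{n-1}}$-measurable shift $Z=F_{n-1}$, non-degenerate variable $F=I_n+\rho R_n$, and $G=R_n$. The point to note is that $\rho R_n$ is \emph{not} $\mathcal{F}_{t_{n-1}}$-measurable, so it cannot be absorbed into $Z$ and must sit inside $F$. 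The formula \eqref{e: IBP} then turns $J_{2,n}$ into $E_{t_{n-1}}\big[\int_0^1\1_{\{F_{n-1}+I_n+\rho R_n>x\}}\,H^{t_{n-1}}_{(\1^2)}(I_n+\rho R_n,R_n)\,d\rho\big]$, and bounding the indicator by $1$ reduces the whole task to controlling $\|H^{t_{n-1}}_{(\1^2)}(I_n+\rho R_n,R_n)\|_{1,t_{n-1},W}$ uniformly in $\rho\in[0,1]$ and in $n$.

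Next I would extract the scale. Writing $I_n+\rho R_n=\sigma_N\widetilde F$ with $\widetilde F$ of unit order and invoking \eqref{IBP reescaling} from Remark \ref{remark 2} gives $H^{t_{n-1}}_{(\1^2)}(I_n+\rho R_n,R_n)=\sigma_N^{-2}H^{t_{n-1}}_{(\1^2)}(\widetilde F,R_n)$. Applying \eqref{IBP estimate} with $n=2$, $p=1$ and $q_1=q_2=q_3=3$ bounds the latter by a constant times $\|(\Gamma^W_{\widetilde F,t_{n-1}})^{-1}\|_{2q_1,t_{n-1},W}^{2}\,\|\widetilde F\|^{6}_{4,4q_2,t_{n-1},W}\,\|R_n\|_{2,q_3,t_{n-1},W}$, where the first two factors are of order one by construction. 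Since $|R_n|\le\|b\|_\infty(t_n-t_{n-1})$ and, by Proposition \ref{relation derivatives} together with $X\in\D^\infty$, the conditional Malliavin--Sobolev norms of $R_n$ are governed by the length of the integration window, one gets $\|R_n\|_{2,q_3,t_{n-1},W}\lesssim t_n-t_{n-1}=\sigma_N^{1/H}$. Collecting factors yields $\|H^{t_{n-1}}_{(\1^2)}(I_n+\rho R_n,R_n)\|_{1,t_{n-1},W}\lesssim\sigma_N^{-2+1/H}$, and since $\sigma_N^2=\alpha_H t^{2H}/N$ this is exactly of the form $CN^{-\gamma}/\sigma_N$ with $\gamma=\frac{1-H}{2H}>0$ for $H\in(1/2,1)$, which is \eqref{e: bound J_2n a priori}. (Any extra factor of $R_n$ in the integrand only improves $\gamma$.)

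The main obstacle is the non-degeneracy input, i.e. showing that $\widetilde F$ really is of unit order, equivalently that $\Gamma^W_{I_n+\rho R_n,t_{n-1}}\gtrsim\sigma_N^2$ with all conditional negative moments bounded uniformly in $n$ and $\rho$. Here the delay is decisive: because $t_n-t_{n-1}<r$, the integrand $\sigma(X_{\cdot-r})$ of $I_n$ is $\mathcal{F}_{t_{n-1}}$-measurable on $[t_{n-1},t_n]$, so conditionally $I_n$ is Gaussian with variance $\|\sigma(X_{\cdot-r})\|^2_{\mathcal{H}[t_{n-1},t_n]}\ge\lambda^2\sigma_N^2$ by ellipticity, exactly as in the proof of Proposition \ref{fita inf J_1}. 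The remaining work is to transfer this lower bound from the fractional matrix $\Gamma_{\cdot,t_{n-1}}$ to the Wiener matrix $\Gamma^W_{\cdot,t_{n-1}}$ through the isometry $K_H^*$ of Proposition \ref{relation derivatives}, to check that the perturbation $\rho R_n$ (of smaller order $\sigma_N^{1/H}$) does not spoil the bound, and to secure the uniform control of the negative moments of $\Gamma^W_{\widetilde F,t_{n-1}}$ and of the higher conditional Malliavin--Sobolev norms of $\widetilde F$ and $R_n$; all of these rest on $\sigma,b\in\mathcal{C}^\infty_b$ and the non-degeneracy of $X_t$ guaranteed by Theorem \ref{representation}.
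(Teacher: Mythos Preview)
Your proposal is correct and follows essentially the same route as the paper: apply the conditional integration-by-parts formula of Proposition \ref{IBP} at order two with $Z=F_{n-1}$, $F=I_n+\rho R_n$, $G=R_n$, use the rescaling identity of Remark \ref{remark 2} to extract $\sigma_N^{-2}$, and then bound the three factors coming from \eqref{IBP estimate}; your normalized variable $\widetilde F$ is exactly the paper's $U_n$ defined in \eqref{definition Un}, and your exponent $\gamma=(1-H)/(2H)$ coincides with the one implicit in Lemma \ref{bound for A_1}. The only point where the paper is more explicit is that, rather than appealing to Theorem \ref{representation} for the ``remaining work'', it proves a dedicated lemma (Lemma \ref{X uniformly bounded}) showing that on the short window $[t_{n-1},t_n]$ all iterated Malliavin derivatives of $X_\tau$ are bounded \emph{uniformly in $\omega$}, because the delay kills the stochastic-integral term and reduces the derivative equation to a linear ODE; this pointwise control is what makes the bounds on $A_2(\rho)$ and $A_3(\rho)$ (your ``order-one'' factors) immediate.
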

The proof of this proposition is not immediate. In order to conclude \eqref{e: bound J_2n a priori}, we need to derive a first estimate using the integration by parts formula and then we will bound each of the terms involved using some technical lemmas. First, recall that
\[
J_{2,n} = E_{t_{n-1}}\left[\int_0^1 \frac{d}{d\rho}\delta_x(F_{n-1} + I_n + \rho R_n) R_n d\rho \right].
\]
We introduce a new random variable $U_n$ defined by 
\begin{equation} \label{definition Un}
    \sigma_N U_n  = I_n + \rho R_n.
\end{equation} 
With this random variable, $J_{2,n}$ can now be written as
\[
J_{2,n} = E_{t_{n-1}}\left[ \int_0^1 \frac{d}{d\rho}\delta_x (F_{n-1} + \sigma_N U_n)R_n d\rho \right].
\]
Using Fubini's theorem, we can rewrite $J_{2,n}$ as
\[
J_{2,n} = \int_0^1 E_{t_{n-1}} \left[ \frac{d}{d\rho}\delta_x(F_{n-1} + \sigma_N U_n) R_n  \right]d\rho
\]
Integration by parts formula \ref{IBP} for second order derivatives and Remark \ref{remark 1} now yields
\[
J_{2,n} =  \int_0^1 E_{t_{n-1}} \left[ \1_{\{I_n + \rho R_n > x- F_{n-1}}\} H^{t_{n-1}}_{(\1^2)}( \sigma_N U_n, R_n)\right] d\rho.
\]
Using then Remark \ref{remark 2} , this last expression can be expressed as
\[
J_{2,n} = \sigma_N^{-2} \int_0^1 E_{t_{n-1}} \left[ \1_{\{I_n + \rho R_n > x- F_{n-1}}\} H^{t_{n-1}}_{(\1^2)}( U_n, R_n)\right] d\rho.
\]
Now, from the fact that $\1_{\{I_n + \rho R_n > x-F_{n-1}\}} \leq 1$, Hölder's inequality and the estimate \eqref{IBP estimate}, $J_{2,n}$ be bounded in the following way:
\[
 |J_{2,n}| \leq c \sigma_N^{-2} A_1 \int_0^1 A_2(\rho)A_3(\rho) d\rho,
\]
where
\[
    A_1 = ||R_n||_{k_1, p_1, t_{n-1}}, 
\]
\[
 A_2(\rho) = || \Gamma^{-1}_{U_n, t_{n-1}}||_{p_3, t_{n-1}}^{k_3},
\]
and
\[
A_3(\rho) = ||U_n ||^{k_4}_{k_2, p_2, t_{n-1}}
\]
for some constants $k_1, k_2, k_3, k_4, p_1, p_2, p_3 > 0$. The exact value of the constants is not extremely important, but they can be determined according to the integration by parts formula \ref{IBP}. Indeed, for the second order derivative case, we know that $k_1 = 2$, $k_2 = 4$, $k_3 = 2$, $k_4 = 6$ and $p_1 = q_3$, $p_2 = 4q_2$, $p_3 = 2q_1$ where $q_1, q_2, q_3$ satisfy $\frac{1}{q_1} + \frac{1}{q_2} + \frac{1}{q_3} = 1$. The bound on the quantities $A_j$ relies on the following lemma:
\begin{lema} \label{X uniformly bounded}
    Let $\pi$ be the partition defined as in Proposition \ref{fita inf J_1}. If $s_1, \dots, s_j, \tau \in [t_{n-1},t_n]$ with $s_1 \vee \dots \vee s_j <\tau$, then $D^{(j)}_{s_1, \dots, s_j} X_{\tau}$ is uniformly bounded in $\omega \in \Omega$.
\end{lema}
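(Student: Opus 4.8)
The plan is to exploit the mesh condition on the partition to reduce the computation of $D^{(j)}_{s_1,\dots,s_j}X_\tau$ to a pathwise linear ODE in the variable $\tau$, which then admits a uniform bound via Gronwall's lemma. The structural fact that makes this work is the following consequence of the delay: since $\sup_{1\le n\le N}|t_n-t_{n-1}|<\eps<r$, every $s\in[t_{n-1},t_n]$ satisfies $s-r\le t_n-r<t_{n-1}$, so $X_{s-r}$ is $\F_{t_{n-1}}$-measurable. Consequently, for any direction $u\in[t_{n-1},t_n]$ one has $u>s-r$ and hence $D_uX_{s-r}=0$, and the same vanishing holds for every higher-order derivative $D^{(i)}_{u_1,\dots,u_i}X_{s-r}$ with $u_1,\dots,u_i\in[t_{n-1},t_n]$. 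This is precisely the mechanism already used in the proof of Proposition \ref{density in (0,r]}, and it will cause the stochastic-integral contribution to disappear from the derivative equations.

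For the base case $j=1$, differentiating $X_\tau=\eta_0+\int_0^\tau\sigma(X_{s-r})\,dB^H_s+\int_0^\tau b(X_s)\,ds$ in the direction $s_1$ yields
\[
D_{s_1}X_\tau=\sigma(X_{s_1-r})+\int_0^\tau\sigma'(X_{s-r})D_{s_1}X_{s-r}\,dB^H_s+\int_0^\tau b'(X_s)D_{s_1}X_s\,ds.
\]
By the observation above the middle integral vanishes, and since $D_{s_1}X_s=0$ for $s<s_1$ we obtain the linear ODE $D_{s_1}X_\tau=\sigma(X_{s_1-r})+\int_{s_1}^\tau b'(X_s)D_{s_1}X_s\,ds$, whose explicit solution is $D_{s_1}X_\tau=\sigma(X_{s_1-r})\exp\big(\int_{s_1}^\tau b'(X_s)\,ds\big)$. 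Using $||\sigma||_{\infty}\le\Lambda$, $|b'|\le||b'||_{\infty}$ and $\tau-s_1\le t_n-t_{n-1}<r$ gives $|D_{s_1}X_\tau|\le\Lambda\, e^{||b'||_{\infty}r}$, uniformly in $\omega$.

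For the inductive step, assume the claim holds for all orders $i\le j-1$ and all points and endpoints lying in $[t_{n-1},t_n]$. For $j\ge2$ the diffusion term contributes nothing: after the first derivative it reduces to $\sigma(X_{s_1-r})$, which is $\F_{t_{n-1}}$-measurable, so $D_{s_2}\sigma(X_{s_1-r})=\sigma'(X_{s_1-r})D_{s_2}X_{s_1-r}=0$ and all further derivatives of it vanish. Hence, writing $\bar s=s_1\vee\cdots\vee s_j$ and $Y_\tau=D^{(j)}_{s_1,\dots,s_j}X_\tau$, one gets $Y_\tau=\int_{\bar s}^\tau D^{(j)}_{s_1,\dots,s_j}b(X_s)\,ds$. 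Expanding $D^{(j)}b(X_s)$ by the Faà di Bruno formula, the unique term carrying the top-order derivative is $b'(X_s)Y_s$, while every remaining term is a product of a factor $b^{(m)}(X_s)$ with lower-order derivatives $D^{(i)}X_s$, $i\le j-1$; denote their sum by $G_s$. Since $b\in\mathcal{C}^\infty_b(\R)$ and the lower-order derivatives are uniformly bounded by the inductive hypothesis, $G_s$ is bounded uniformly in $\omega$, so $Y_\tau=\int_{\bar s}^\tau(b'(X_s)Y_s+G_s)\,ds$ is a linear equation with bounded forcing. Gronwall's lemma on $[\bar s,\tau]$, an interval of length at most $r$, then gives a uniform bound on $|Y_\tau|$ and closes the induction.

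The step I expect to be the most delicate is the rigorous justification of these formal derivative computations — in particular the formula for the Malliavin derivative of the Young integral $\int_0^\tau\sigma(X_{s-r})\,dB^H_s$ and the vanishing of $D_uX_{s-r}$ for $u\ge t_{n-1}$. This rests on the fact that $X_\tau\in\D^\infty$ (Theorem \ref{representation}), so that all the derivatives exist and differentiation may be carried through the integrals, together with the support property of the Malliavin derivative with respect to $B^H$, namely that differentiating an $\F_v$-measurable random variable in a direction $u>v$ gives zero. Once this measurability and vanishing input is secured, the rest of the argument is the combinatorial Faà di Bruno bookkeeping and a routine application of Gronwall's lemma.
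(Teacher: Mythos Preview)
Your proof is correct and follows essentially the same route as the paper's: both establish the base case by solving the explicit linear ODE for $D_{s_1}X_\tau$, and both handle the inductive step by expanding $D^{(j)}b(X_s)$ via Faà di Bruno, isolating the top-order term $b'(X_s)D^{(j)}X_s$, and closing with Gronwall. The only (minor) difference is that you observe the boundary terms $D_{s_1}\cdots\check{D}_{s_q}\cdots D_{s_j}\sigma(X_{s_q-r})$ vanish identically for $j\ge 2$ (since $s_q-r<t_{n-1}\le s_i$ forces $D_{s_i}X_{s_q-r}=0$), whereas the paper keeps these terms and bounds them by a constant $C_1$ via the induction hypothesis; your observation is a slight streamlining but does not change the argument.
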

\begin{proof}
    The result for the first order derivative is straight-forward. Indeed, differentiating equation \eqref{e: main equation} in the direction $s \in [t_{n-1}, t_n]$ we obtain
    \[
    D_sX_{\tau} = \sigma(X_{s-r}) + \int_{0}^{\tau} D_sX_{u-r}\sigma'(X_{u-r})dB^H_u + \int_0^{\tau} b'(X_u)D_sX_u du.
    \]
    Now, since $D_sX_{u-r} = 0$ for all $u \in [0,\tau]$ due to the choice of the partition, last expression can be written as
    \[
    D_sX_{\tau} = \sigma(X_{s-r}) + \int_0^{\tau} b'(X_u) D_sX_u du.
    \]
    This equation has an explicit solution
    \[
    D_sX_{\tau} = \sigma(X_{s-r}) \exp\left(\int_s^{\tau} b'(X_u) du \right).
    \]
    Finally, from the hypothesis on $\sigma$ and $b'$, we have that there exists a constant $C  > 0$ such that
    \[
    |D_sX_{\tau}| \leq C.
    \]
    In order to prove the result for higher order derivatives, we will use an induction argument. Assume that all derivatives up to order $j-1$ are bounded. The derivative of order $j$ satisfies the following equation:
    \begin{equation} \label{HOD}
    \begin{split}
        D^{(j)}_{s_1, \dots , s_j} X_{\tau} = &\sum_{q = 1}^j D_{s_1} \cdots \check{D_{s_q}} \cdots D_{s_j}\sigma(X_{s-r}) \\
         + &\int_{0}^{\tau}  D_{s_1} \dots D_{s_j} \sigma(X_{u-r}) dB^H_u \\
        +  &\int_{0}^{\tau} D_{s_1} \dots D_{s_j} b(X_{u}) du.
    \end{split}
    \end{equation}
    Now, since $s_1\vee \cdots \vee s_j - r < t_{n-1}$ and $\tau \in [t_{n-1}, t_n]$, then equation \eqref{HOD} can be written as
    \begin{equation} \label{e: induction}
        D^{(j)}_{s_1,\dots, s_j} X_{\tau} = \sum_{q = 1}^j D_{s_1} \cdots \check{D_{s_q}} \cdots D_{s_j}\sigma(X_{s_{q}-r})
        + \int_{s_1 \vee \cdots \vee s_j}^{\tau} D_{s_1} \dots D_{s_j} b(X_{u}) du.
    \end{equation}
   On the one hand, each term of the form
   \[
   D_{s_1} \cdots \check{D_{s_q}} \cdots D_{s_j}\sigma(X_{s_{q}-r})
   \]
   involves involves derivatives of orders $k = 1, \dots, j-1$ of $X_t$ and derivatives of orders $l = 1, \dots j$ of $\sigma$. Since $\sigma$ has bounded derivatives of all orders and, by induction hypothesis, all derivatives up to order $j-1$ of $X_t$ are bounded we get that there exists $C_1 > 0$ such that
   \[
   \sum_{q = 1}^{j} |D_{s_1} \cdots \check{D_{s_q}} \cdots D_{s_j}\sigma(X_{s_{q}-r})| \leq C_1.
   \]
   Concerning the term $D_{s_1} \dots D_{s_j} b(X_{u})$, we use the product rule and the chain rule for Malliavin derivatives $j$ times and we find that
   \[
   D_{s_1} \dots D_{s_j} b(X_{u}) = \sum_{k=1}^j b^{(k)}(X_u) \sum_{\Pi \in \mathcal{P}(j,k)} \prod_{l=1}^kD^{(|\Pi_l|)}_{s_{\Pi_l}}X_u
   \]
   where $\mathcal{P}(j,k)$ denotes the set of all partitions of $\{s_1, \dots, s_j\}$ into $k$ subsets, $\Pi_l$ denotes the $l$-th subset of this partition. $|\Pi_l|$ denotes its length and $s_{\Pi_l} = (s_{i_1}, \dots, s_{i_{|\Pi_l}})$ if $\Pi_l = \{s_{i_1}, \dots, s_{i_{|\Pi_l|}}\}$. Even though it is a sum with a lot of terms, the only one that involves a $j$-th order derivative of $X_u$ is
   \[
   b'(X_u) D^{(j)}_{s_1, \dots, s_j}X_u.
   \]
   Using that all derivatives of $b$ are bounded and all derivatives of $X_u$ up to order $j-1$ are bounded, we find that there exist $C_2, C_3 > 0$ such that
   \begin{equation} \label{higher order derivatives of b}
    D_{s_1} \dots D_{s_j} b(X_{u}) \leq C_2 + C_3 D^{(j)}_{s_1, \dots, s_j}X_u.
   \end{equation}
   Plugging this into \eqref{e: induction}, we are lead to
   \[
   D^{(j)}_{s_1, \dots, s_j}X_{\tau} \leq C_1 + \int_0^{\tau} \left( C_2 + C_3  D^{(j)}_{s_1, \dots, s_j}X_u \right) du.
   \]
   Using Gronwall's lemma, we conclude that $|D^{(j)}_{s_1, \dots, s_j}X_{\tau}|$ is uniformly bounded.
\end{proof}
We can now proceed to derive the estimates. Concerning the following 3 lemmas, in the proofs we will use $C$ as a universal constant that may switch from line to line. If several different constants are involved, we will name them $C_1, C_2, \dots$. We drop the dependency of the constant on the parameters for sake of streamlining the reading of the paper.

\begin{lema}[Bound for $A_1$] \label{bound for A_1} Recall that $A_1 = ||R_n||_{k_1, p_1, t_{n-1}}$. Under the same hypothesis as in Lemma \ref{X uniformly bounded}, there exists a constant $C > 0$ such that
\[
 A_1 \leq C N^{-1/2H}.
\]
In particular, there exists $\gamma > 0$ and $C > 0$ such that
\[
\sigma_N^{-2}A_1 \leq C \frac{N^{-\gamma}}{\sigma_N}.
\]
\end{lema}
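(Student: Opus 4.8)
The plan is to expand the conditional Sobolev norm $A_1 = \|R_n\|_{2,p_1,t_{n-1}}$ into its three constituent pieces---the $L^{p_1}$-norm of $R_n$ itself and the $L^{p_1}$-norms of $\|DR_n\|_{\mathcal{H}[t_{n-1},T]}$ and $\|D^{(2)}R_n\|_{\mathcal{H}[t_{n-1},T]^{\otimes 2}}$---and to show that each is controlled by a power of the mesh $\Delta := t_n - t_{n-1}$, with the zeroth-order term dominating. Throughout I record that $\Delta^{2H} = \sigma_N^2 = \alpha_H t^{2H}/N$, so that $\Delta = (\alpha_H)^{1/(2H)}\, t\, N^{-1/(2H)}$; in particular $\Delta$ is of order $N^{-1/(2H)}$. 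Since all the bounds below will be deterministic (uniform in $\omega$, via Lemma \ref{X uniformly bounded}), the conditioning $E_{t_{n-1}}[\cdot]$ is harmless and only the pointwise sizes matter.

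First, because $b \in \mathcal{C}^{\infty}_b(\R)$ one has $|R_n| \le \|b\|_{\infty}\Delta$ pointwise, hence $E_{t_{n-1}}[|R_n|^{p_1}]^{1/p_1}\le C\Delta$. For the first derivative, differentiating under the integral sign gives $D_sR_n = \int_{t_{n-1}}^{t_n} b'(X_u)\,D_sX_u\,du$; since $D_sX_u = 0$ for $s>u$, this vanishes whenever $s>t_n$, so $DR_n$ is supported on $[t_{n-1},t_n]$, and there Lemma \ref{X uniformly bounded} gives $|D_sX_u|\le C$, whence $|D_sR_n|\le C\Delta$. The decisive point is the scaling of the fractional norm on the small square: rescaling $u,v$ by $\Delta$,
\[
\int_{t_{n-1}}^{t_n}\!\!\int_{t_{n-1}}^{t_n}|u-v|^{2H-2}\,du\,dv = \Delta^{2H}\int_0^1\!\!\int_0^1|x-y|^{2H-2}\,dx\,dy = \frac{\Delta^{2H}}{\alpha_H},
\]
so that $\|DR_n\|^2_{\mathcal{H}[t_{n-1},T]}\le C\Delta^2\cdot\Delta^{2H}$, i.e. $\|DR_n\|_{\mathcal{H}[t_{n-1},T]}\le C\Delta^{1+H}$. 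The same computation applies to the second derivative using $D_{s_1}D_{s_2}b(X_u) = b''(X_u)D_{s_1}X_u\,D_{s_2}X_u + b'(X_u)D^{(2)}_{s_1,s_2}X_u$, both factors being uniformly bounded by Lemma \ref{X uniformly bounded}; with the same $\Delta^{2H}$-per-pair scaling now applied in each of the two tensor variables over $[t_{n-1},t_n]^2$, this yields $\|D^{(2)}R_n\|_{\mathcal{H}[t_{n-1},T]^{\otimes 2}}\le C\Delta^{1+2H}$.

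Combining the three estimates,
\[
A_1 \le \big(C\Delta^{p_1} + C\Delta^{(1+H)p_1} + C\Delta^{(1+2H)p_1}\big)^{1/p_1}\le C\Delta
\]
for $N$ large enough that $\Delta\le 1$, since $1<1+H<1+2H$ makes the zeroth-order term dominant. This gives $A_1\le C\,N^{-1/(2H)}$, the first assertion. For the second, recall $\sigma_N = \sqrt{\alpha_H}\,t^H N^{-1/2}$, so $\sigma_N^{-2} = \alpha_H^{-1}t^{-2H}N$ and
\[
\sigma_N^{-2}A_1 \le C\,N\cdot N^{-1/(2H)} = C\,N^{1-1/(2H)} = C\,\sqrt{\alpha_H}\,t^H\,\sigma_N^{-1}\,N^{-\gamma},
\]
where $\gamma := \tfrac{1}{2H}-\tfrac{1}{2} = \tfrac{1-H}{2H}$ is strictly positive since $H\in(1/2,1)$. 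This is exactly the claimed bound $\sigma_N^{-2}A_1\le C\,N^{-\gamma}/\sigma_N$. The only genuine (if modest) obstacle here is the derivative-norm bookkeeping: once one notices that restricting the fractional inner product to the mesh interval $[t_{n-1},t_n]$ costs a factor $\Delta^{2H}$ per pair of integration variables, the derivative terms become strictly subdominant and the estimate collapses to the trivial size $|R_n|\le\|b\|_\infty\Delta$.
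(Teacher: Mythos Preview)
Your proof is correct and follows essentially the same route as the paper's: bound $|R_n|$ by $\|b\|_\infty\Delta$, bound the Malliavin derivatives of $R_n$ pointwise via Lemma \ref{X uniformly bounded} and the boundedness of the derivatives of $b$, and then use the $\Delta^{2H}$-scaling of the restricted fractional norm to see that the derivative contributions are strictly subdominant. The paper treats the first derivative in detail and leaves the higher-order ones to the reader, whereas you carry out the second-order term explicitly and also make the value $\gamma=\frac{1-H}{2H}$ explicit; apart from this extra detail the arguments coincide.
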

\begin{proof}
By definition,
    \[
    A_1^{p_1} = ||R_n||^{p_1}_{k_1, p_1, t_{n-1}} = E_{t_{n-1}}(|R_n|^{p_1}) + \sum_{j=1}^{k_1} E_{t_{n-1}}\left(||D^{(j)}R_n||^{p_1}_{\mathcal{H}^{\otimes j}[t_{n-1},t_n]}\right).
    \]
    The strategy of the proof is checking that $|R_n|$ is of the order of $N^{-1/2H}$, and the Malliavin derivatives are $o(N^{-1})$, so that the asymptotic behaviour of $A_1$ is dominated by the first term of $A_1$, and the terms involving the derivatives of $R_n$ are negligible. For the first term, notice that $|R_n| \leq \int_{t_{n-1}}^{t_n}| b(X_s)| ds \leq ||b||_{L^{\infty}}|t_n-t_{n-1}|$. Since the partition is chosen so that $|t_n - t_{n-1}|^{2H} =\alpha_H \frac{t^{2H}}{N}$ , we can conclude that there exists a constant $C > 0$ such that $|R_n| \leq C N^{-1/2H}$, so we can find $C > 0$ such that $E_ {t_{n-1}}(|R_n|^{p_1}) \leq C N^{-p_1/2H}$, which is a consistent estimate with what we want to prove.
    
    The rest of the proof is done by bounding all the derivatives of order $j$ from $1$ up to $k_1$. We will show the bound for the first derivative since its computation will be useful in future lemmas, and the conclusion for the higher order derivatives can be easily derived by using the estimate \eqref{higher order derivatives of b} and the fact that the Malliavin derivatives of $X_t$ are bounded thanks to Lemma \ref{X uniformly bounded}. For the first derivative,
    \[
    ||DR_n||^{p_1}_{\mathcal{H}[t_{n-1}, t_n]} = \left(\int_{t_{n-1}}^{t_n} \int_{t_{n-1}}^{t_n} D_uR_nD_vR_n|u-v|^{2H-2}du dv\right )^{p_1/2}.
    \]
    Now, using the fact that
    \[
    |D_sR_n| \leq \int_{t_{n-1}}^{t_n} |b'(X_u) D_sX_u| du \leq C|t_n - t_{n-1}| \leq CN^{-1/2H}
    \]
    we find that
    \begin{equation} \label{bound norm of DR_n}
        ||DR_n||^{p_1}_{\mathcal{H}[t_{n-1}, t_n]} \leq C \left( N^{-1/H} |t_n - t_{n-1}|^{2H}\right)^{p_1/2} \leq CN^{-\frac{p_1(1+\frac{1}{H})}{2}}.
    \end{equation}
    Hence,
    \[
    E_{t_{n-1}}\left( ||DR_n||^{p_1}_{\mathcal{H}[t_{n-1}, t_n]}\right) \leq CN^{-\frac{p_1(1+\frac{1}{H})}{2}}
    \]
    as desired.
\end{proof}

\begin{lema}[bound for $A_2(\rho)$] \label{bound for A2} Recall that $A_2(\rho) = ||\Gamma^{-1}_{U_n, t_{n-1}}||^{k_3}_{p_3, t_{n-1}}$. Under the same hypothesis as in Lemma \ref{X uniformly bounded}, the quantity $A_2(\rho)$ is uniformly bounded.
\end{lema}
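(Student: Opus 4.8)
The plan is to bound $A_2(\rho)$ by producing a strictly positive deterministic lower bound for $\Gamma_{U_n,t_{n-1}}=\|DU_n\|^2_{\mathcal{H}[t_{n-1},T]}$, uniform in $\omega$ and in $n$ once $N$ is large enough. Since $A_2(\rho)=\|\Gamma^{-1}_{U_n,t_{n-1}}\|^{k_3}_{p_3,t_{n-1}}$ is the conditional $L^{p_3}$-norm of the scalar $\Gamma^{-1}_{U_n,t_{n-1}}$ raised to the power $k_3$, an estimate of the form $\Gamma_{U_n,t_{n-1}}\ge c>0$ immediately yields $\Gamma^{-1}_{U_n,t_{n-1}}\le c^{-1}$ deterministically, hence $A_2(\rho)\le c^{-k_3}$, which is the desired uniform bound.

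First I would compute $DU_n$. Recall $\sigma_N U_n=I_n+\rho R_n$. Because the partition is chosen so that $X_{s-r}$ is $\mathcal{F}_{t_{n-1}}$-measurable for every $s\in[t_{n-1},t_n]$ (as $t_n-t_{n-1}<\eps<r$), the same cancellation used in the proof of Lemma \ref{X uniformly bounded} makes the derivative of the stochastic integral reduce to its diagonal term, $D_\theta I_n=\sigma(X_{\theta-r})\,\1_{[t_{n-1},t_n]}(\theta)$. Moreover $D_\theta I_n=D_\theta R_n=0$ for $\theta>t_n$, so the integrand defining the $\mathcal{H}[t_{n-1},T]$-norm is supported on $[t_{n-1},t_n]^2$ and that norm collapses to the $\mathcal{H}[t_{n-1},t_n]$-norm, with
\[
\sigma_N\,D_\theta U_n=\sigma(X_{\theta-r})+\rho\,D_\theta R_n,\qquad \theta\in[t_{n-1},t_n].
\]

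The heart of the argument is a lower bound for $\|\sigma(X_{\cdot-r})+\rho\,DR_n\|_{\mathcal{H}[t_{n-1},t_n]}$, which I would obtain from the reverse triangle inequality combined with the two estimates already available. The elliptic leading term is controlled exactly as in Proposition \ref{fita inf J_1}, giving $\|\sigma(X_{\cdot-r})\|_{\mathcal{H}[t_{n-1},t_n]}\ge\lambda\sigma_N$. For the correction I would invoke the estimate \eqref{bound norm of DR_n} from the proof of Lemma \ref{bound for A_1}, namely $\|DR_n\|^{p_1}_{\mathcal{H}[t_{n-1},t_n]}\le CN^{-p_1(1+1/H)/2}$; taking the $1/p_1$-th power and rewriting $N^{-1/2}$ as a constant multiple of $\sigma_N$ gives $\|DR_n\|_{\mathcal{H}[t_{n-1},t_n]}\le CN^{-1/2H}\sigma_N$. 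Using $0\le\rho\le1$ I then get
\[
\|\sigma(X_{\cdot-r})+\rho\,DR_n\|_{\mathcal{H}[t_{n-1},t_n]}\ge\sigma_N\big(\lambda-CN^{-1/2H}\big)\ge\tfrac{\lambda}{2}\,\sigma_N
\]
for all $N$ large enough that $CN^{-1/2H}\le\lambda/2$. Dividing by $\sigma_N$ yields $\Gamma_{U_n,t_{n-1}}=\sigma_N^{-2}\|\sigma(X_{\cdot-r})+\rho\,DR_n\|^2_{\mathcal{H}[t_{n-1},t_n]}\ge\lambda^2/4$, and the conclusion follows as explained in the first paragraph.

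The only genuine obstacle is ensuring that the drift remainder $R_n$ does not spoil the ellipticity-driven lower bound; this is precisely where the hypothesis $\sigma\ge\lambda$ and the quantitative smallness $\|DR_n\|_{\mathcal{H}[t_{n-1},t_n]}=o(\sigma_N)$ from Lemma \ref{bound for A_1} are indispensable, and where the freedom to enlarge $N$ (justified by the remark following Proposition \ref{fita inf J_1}) is used. Since every bound above is deterministic in $\omega$, the conditional expectation in $\|\cdot\|_{p_3,t_{n-1}}$ only preserves the constant, so no additional integrability estimate is required.
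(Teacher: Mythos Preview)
Your proof is correct and follows essentially the same approach as the paper: bound $\Gamma_{U_n,t_{n-1}}$ from below by a positive deterministic constant using the ellipticity of $\sigma$ to control $\|DI_n\|_{\mathcal{H}[t_{n-1},t_n]}$ and the smallness estimate \eqref{bound norm of DR_n} to absorb $\|DR_n\|_{\mathcal{H}[t_{n-1},t_n]}$ for $N$ large. The only cosmetic difference is that you use the reverse triangle inequality where the paper invokes the quadratic Hilbert-space inequality $\|f+g\|^2\ge\tfrac{1}{2}\|f\|^2-\|g\|^2$ of Lemma~\ref{Ap 1}; both yield the same conclusion.
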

\begin{proof}
    Recall that
    \[
    \sigma_N U_n = I_n + \rho R_n,
    \]
    so 
    \[
    \sigma^2_N ||DU_n||^2_{\mathcal{H}[t_{n-1}, t_n]} = ||DI_n + \rho DR_n||^2_{\mathcal{H}[t_{n-1}, t_n]}.
    \]
    Now, using Lemma \ref{Ap 1} in the appendix and the fact that $-\rho \geq -1$ then we have
    \[
    \sigma^2_N ||DU_n||^2_{\mathcal{H}[t_{n-1}, t_n]} \geq \frac{||DI_n||^2_{\mathcal{H}[t_{n-1}, t_n]}}{2} - ||DR_n||^2_{\mathcal{H}[t_{n-1}, t_n]}.
    \]
    Since
    \[
    I_n = \int_{t_{n-1}}^{t_n} \sigma(X_{s-r})dB^H_s,
    \]
    then it is clear that for $s \in [t_{n-1}, t_n]$ we have
    \[
    D_sI_n = \sigma(X_{s-r}).
    \]
    From the fact that $\sigma(X_{s-r}) > \lambda$ and $\int_{t_{n-1}}^{t_n}\int_{t_{n-1}}^{t_n} |u-v|^{2H-2} dudv = C \sigma^2_N$ (see Lemma \ref{Ap 2} in the appendix) we derive the following estimate:
    \[
    ||DI_n||^2_{\mathcal{H}[t_{n-1},t_n]} = \int_{t_{n-1}}^{t_n} \int_{t_{n-1}}^{t_n} \sigma(X_{u-r})\sigma(X_{v-r}) \cdot |u-v|^{2H-2} du dv \geq C \sigma^2_N.
    \]
    Moreover, by the inequality \eqref{bound norm of DR_n} in the proof of the previous lemma for $p_1 = 2$, we have that $||DR_n||^2_{\mathcal{H}[t_{n-1}, t_n]} \leq CN^{-1-\frac{1}{H}}$. Since $\sigma^2_N = O(N^{-1})$, for $N$ big enough we deduce that there exists $C_1, C_2, C_3 > 0$ such that
    \[
    \sigma^2_N ||DU_n||^2_{\mathcal{H}[t_{n-1},t_n]} \geq C_1\sigma^2_N - C_2 N^{-1-\frac{1}{H}} \geq C_3 \sigma^2_N
    \]
    so
    \[
    ||DU_n||_{\mathcal{H}[t_{n-1}, t_n]}^2 \geq C_3 \frac{\sigma^2_N}{\sigma^2_N} = C_3 > 0.
    \]
    This implies that
    \[
    \Gamma^{-1}_{U_n, t_{n-1}} \leq \frac{1}{C_3}.
    \]
    Taking conditional norms we deduce that there exists $C > 0$ satisfying $||\Gamma^{-1}_{U_n, t_{n-1}}||^{k_3}_{p_3, t_{n-1}} \leq C$, which proves the uniform bound of $A_2(\rho)$.
\end{proof}

\begin{lema}[bound for $A_3(\rho)$] \label{bound for A3} Recall that $A_3(\rho) = ||U_n||^{k_4}_{k_2, p_2, t_{n-1}}$. Under the same hypothesis as in Lemma \ref{X uniformly bounded}, the quantity $A_3(\rho)$ is uniformly bounded in $\rho$ and in $\omega \in \Omega$.
\end{lema}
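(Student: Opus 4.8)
The plan is to exploit the explicit scaling in \eqref{definition Un}: since $\sigma_N U_n = I_n + \rho R_n$, I would write $U_n = \sigma_N^{-1}(I_n + \rho R_n)$ and, by linearity of the Malliavin derivative, $D^{(j)}U_n = \sigma_N^{-1}\bigl(D^{(j)}I_n + \rho\,D^{(j)}R_n\bigr)$ for every $j$. As $A_3(\rho) = \|U_n\|^{k_4}_{k_2,p_2,t_{n-1}}$ with $k_2=4$ and $k_4=6$, it suffices to prove that $\|U_n\|_{4,p_2,t_{n-1}}$ is bounded uniformly in $n$, in $\rho\in[0,1]$ and in $\omega$, since raising to the sixth power preserves uniform boundedness. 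By the definition of the conditional norm, this reduces to bounding, uniformly, the quantity $E_{t_{n-1}}[|U_n|^{p_2}]$ and the four terms $E_{t_{n-1}}\bigl[\|D^{(j)}U_n\|^{p_2}_{\mathcal{H}^{\otimes j}[t_{n-1},t_n]}\bigr]$ for $j=1,\dots,4$. In each of these I would treat the $I_n$ and $R_n$ contributions separately, expecting the $R_n$ part to be negligible and the $I_n$ part to carry the bound.

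For the zeroth order term I would use, exactly as in the proof of Proposition \ref{fita inf J_1}, that $t_n-t_{n-1}<r$ forces $\sigma(X_{s-r})$ to be $\mathcal{F}_{t_{n-1}}$-measurable on $[t_{n-1},t_n]$, so that conditionally on $\mathcal{F}_{t_{n-1}}$ the variable $I_n$ is centered Gaussian with variance $\|\sigma(X_{\cdot-r})\|^2_{\mathcal{H}[t_{n-1},t_n]}\le\Lambda^2\sigma_N^2$. Hence $\sigma_N^{-1}I_n$ is conditionally Gaussian with variance at most $\Lambda^2$, and its conditional $p_2$-moment is bounded by a constant depending only on $p_2$ and $\Lambda$. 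For the remainder, $|\rho R_n|\le|R_n|\le CN^{-1/2H}$ while $\sigma_N^{-1}=CN^{1/2}$, so $\sigma_N^{-1}|\rho R_n|\le CN^{1/2-1/2H}$, which is bounded for $H\in(1/2,1)$; this gives $E_{t_{n-1}}[|U_n|^{p_2}]\le C$. For the first order term I would invoke $D_sI_n=\sigma(X_{s-r})$ (as in Lemma \ref{bound for A2}), bounded by $\Lambda$, together with Lemma \ref{Ap 2} to get $\|DI_n\|^2_{\mathcal{H}[t_{n-1},t_n]}\le C\sigma_N^2$, so that $\sigma_N^{-1}\|DI_n\|_{\mathcal{H}[t_{n-1},t_n]}\le C$; the $R_n$ contribution is controlled by \eqref{bound norm of DR_n}, which yields $\sigma_N^{-1}\|DR_n\|_{\mathcal{H}[t_{n-1},t_n]}\le CN^{-1/2H}\to 0$.

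The decisive simplification, where the delay structure pays off, occurs in the orders $j=2,3,4$: here I claim $D^{(j)}I_n\equiv 0$ on $[t_{n-1},t_n]^{j}$. Indeed $D_{s_1}I_n=\sigma(X_{s_1-r})$ is $\mathcal{F}_{t_{n-1}}$-measurable because $s_1-r<t_{n-1}$, so differentiating once more in any direction $s_2\in[t_{n-1},t_n]$ gives $\sigma'(X_{s_1-r})\,D_{s_2}X_{s_1-r}=0$, since $D_{s_2}X_{s_1-r}=0$ for $s_2\ge t_{n-1}>s_1-r$; iterating kills all derivatives of order at least two. Therefore $D^{(j)}U_n=\sigma_N^{-1}\rho\,D^{(j)}R_n$, and combining the pointwise bound $|D^{(j)}R_n|\le C|t_n-t_{n-1}|=C\sigma_N^{1/H}$, valid by Lemma \ref{X uniformly bounded}, with $j$ applications of Lemma \ref{Ap 2}, I obtain $\|D^{(j)}R_n\|_{\mathcal{H}^{\otimes j}[t_{n-1},t_n]}\le C\sigma_N^{1/H+j}$, whence $\sigma_N^{-1}\|D^{(j)}R_n\|_{\mathcal{H}^{\otimes j}[t_{n-1},t_n]}\le C\sigma_N^{1/H+j-1}\to 0$. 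Collecting the five estimates yields $\|U_n\|_{4,p_2,t_{n-1}}\le C$ uniformly, and raising to the power $k_4=6$ proves the claim.

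As for difficulty, the genuinely delicate point is the vanishing of the higher-order Malliavin derivatives of $I_n$, that is, justifying $D_{s_2}X_{s_1-r}=0$ in the fractional Brownian setting through the fact that the support of $DX_{s_1-r}$ lies in $[0,s_1-r]$. Once this is in place, the remaining work is the routine power-counting in $\sigma_N=(\alpha_H t^{2H}/N)^{1/2}$ already performed for $A_1$ and $A_2$, so I expect no further obstacle.
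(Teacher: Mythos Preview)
Your proposal is correct and follows essentially the same route as the paper: write $U_n=\sigma_N^{-1}(I_n+\rho R_n)$, treat the $I_n$ and $R_n$ contributions separately, use the conditional Gaussianity of $I_n$ for the zeroth-order moment, the explicit formula $D_sI_n=\sigma(X_{s-r})$ together with the key observation that higher-order Malliavin derivatives of $I_n$ vanish (because $s-r<t_{n-1}$), and control the $R_n$ terms by the estimates already obtained in Lemma~\ref{bound for A_1}. The only cosmetic difference is that the paper applies Minkowski at the level of the full conditional norm $\|\cdot\|_{k_2,p_2,t_{n-1}}$ and then bounds $\|I_n\|_{k_2,p_2,t_{n-1}}$ and $\|R_n\|_{k_2,p_2,t_{n-1}}$ globally, whereas you unpack the norm and bound each order $j=0,\dots,4$ separately; both arrive at the same estimates.
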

\begin{proof}
    Applying the relation $\sigma_N U_n = I_n + \rho R_n$ we readily see by applying norms that
    \[
    \sigma^{k_4}_N||U_n||^{k_4}_{k_2,p_2, t_{n-1}} = ||I_n + \rho R_n||^{k_4}_{k_2,p_2, t_{n-1}}.
    \]
   In virtue of Minkowski's inequality, the fact that $(|a|+|b|)^{k_4} \leq C(|a|^{k_4} + |b|^{k_4})$ and $\rho \leq 1$, we have
   \begin{equation} \label{bound Un}
  ||U_n||^{k_4}_{k_2,p_2, t_{n-1}} \leq C \sigma_N^{-k_4} \left( ||I_n||^{k_4}_{k_2,p_2, t_{n-1}} + ||R_n||^{k_4}_{k_2,p_2, t_{n-1}} \right),
   \end{equation}
   so it is enough to bound the quantities  $||I_n||^{k_4}_{k_2,p_2, t_{n-1}}$ and $||R_n||^{k_4}_{k_2,p_2, t_{n-1}}$ separately. Lemma \ref{bound for A_1} gives us the bound
    \begin{equation} \label{estimate 1}
    ||R_n||^{k_4}_{k_2, p_2, t_{n-1}} \leq C N^{-k_4/2H}.
    \end{equation}
    In particular, this bound implies that
    \begin{equation} \label{Bound Rn}
    \sigma^{-k_4}_N||R_n||^{k_4}_{k_2,p_2,t_{n-1}} \leq C N^{\frac{-k_4}{2}\left( \frac{1}{H}-1\right)}.
    \end{equation}
    For the $I_n$ term, we know that $D_sI_n = \sigma(X_{s-r})$ and therefore, the $j$-th order derivatives of $I_n$ in directions belonging to $[t_n, t_{n-1}]^j$ vanish if $j \geq 2$ due to the fact that the partition is chosen so that $s-r < t_{n-1}$ for all $s \in [t_{n-1}, t_n]$. This implies that
    \[
    ||I_n||^{k_4}_{k_2, p_2, t_{n-1}} = \left(E_{t_{n-1}}(|I_n|^{p_2}) + E_{t_{n-1}}(||DI_n||_{\mathcal{H}[t_{n-1},t_n]}^{p_2})  \right)^{k_4/p_2}.
    \]
    On the one hand, since $I_n = \int_{t_{n-1}}^{t_n} \sigma(X_{s-r})dB^H_s$ and $s-r < t_{n-1}$, the law of $I_n$ conditioned to $\mathcal{F}_{t_{n-1}}$ is conditionally Gaussian with zero mean and variance $||\sigma(X_{\cdot -r})||_{\mathcal{H}[t_{n-1},t_n]}^2$. Hence, it is then well-known that by the properties of the moments of Gaussian random variables we have
    \[
    E_{t_{n-1}}(|I_n|^{p_2}) = C||\sigma(X_{\cdot -r})||^{p_2}_{\mathcal{H}[t_{n-1},t_n]}.
    \]
    Finally, using that $\sigma(X_{s-r}) \leq \Lambda$ it is easy to find that
    \begin{equation} \label{bound variance}
    ||\sigma(X_{\cdot-r})||^{p_2}_{\mathcal{H}[t_{n-1},t_n]} \leq \Lambda^{p_2} \sigma_N^{p_2},
    \end{equation}
    from which we conclude that
    \[
    E_{t_{n-1}}(|I_n|^{p_2}) \leq C\sigma^{p_2}_N
    \]
    for a certain constant $C > 0$. Now, since $||DI_n||^{p_2}_{\mathcal{H}[t_{n-1},t_n]} = ||\sigma(X_{\cdot}-r)||^{p_2}_{\mathcal{H}[t_{n-1},t_n]}$, we resort to inequality \eqref{bound variance} to conclude that there exists $C > 0$ such that
    \[
    E_{t_{n-1}}(||DI_n||^{p_2}_{\mathcal{H}[t_{n-1},t_n]}) \leq C\sigma^{p_2}_N.
    \]
    All in all, the conclusion of the previous estimates is that
    \begin{equation} 
        ||I_n||^{k_4}_{k_2, p_2, t_{n-1}} \leq C \sigma_N^{k_4},
    \end{equation}
    and in particular,
    \begin{equation}\label{Bound In}
        \sigma_N^{-k_4}||I_n||^{k_4}_{k_2, p_2, t_{n-1}} \leq C.
    \end{equation}
   
    To end the proof of this lemma, we plug in estimates \eqref{Bound Rn} and \eqref{Bound In} in \eqref{bound Un} and we get
    \[
    ||U_n||^{k_4}_{k_2, p_2, t_{n-1}} \leq C
    \]
    for a constant $C > 0$ depending on $p_2, \Lambda, k_4$ and $H$.
\end{proof}
 Let us now illustrate how this 3 lemmas prove Proposition \ref{bound J_2n a priori}. Recall that, using the integration by parts formula, we derived the estimate
    \[
    |J_{2,n}| \leq \sigma^{-2}_N A_1 \int_0^1 A_2(\rho) A_3(\rho) d \rho.
    \]
First, the conclusion of Lemma \ref{bound for A_1} is that
\[
\sigma_N^{-2}A_1 \leq \frac{C N^{-\gamma}}{\sigma_N}
\]
for real constants $C, \gamma > 0$. Moreover, the conclusion of Lemmas \ref{bound for A2} and \ref{bound for A3} respectively is that there exists a constant $C > 0$ such that $A_2(\rho) \leq C$ and $A_3(\rho) \leq C$ uniformly in $\rho \in [0,1]$ and in $\omega \in \Omega$. This information lead us to the following estimate:
\begin{equation} \label{Final bound J2n}
|J_{2,n}| \leq \sigma^{-2}_N A_1 \int_0^1 A_2(\rho) A_3(\rho) d \rho \leq \frac{C N^{-\gamma}}{\sigma_N}
\end{equation}
which proves Proposition \ref{bound J_2n a priori}. With these bounds on $J_{1,n}$ and $J_{2,n}$, we have all the tools needed in order to prove the lower bound for $t > r$.

\subsection{Proof of the lower bound} \label{Final proof}
In the section \ref{First case} we have already proved the result when $t \in (0,r]$, and in subsections \ref{bound J1n} and \ref{bound J2n} we have proved all the auxiliary results that allow us to proof the lower bound. This section is devoted to put all the information together and conclude that the density of the solution $X_t$ of \eqref{e: main equation} is strictly positive in its support.

\textit{Proof of Theorem \ref{t: main result}}
Recall that, thanks to Proposition \ref{fita inf J_1} and the bound we have concluded in equation \eqref{Final bound J2n}, we can bound $E_{t_{n-1}}[\delta_x(F_n)]$ as
\begin{equation} \label{lower bound conditioned}
E_{t_{n-1}}[\delta_x(F_n)] \geq \frac{1}{\sqrt{2\pi \Lambda^2 \sigma^2_N}} \exp \left( - \frac{(x-F_{n-1})^2}{2 \lambda^2 \sigma^2_N} \right) - \frac{C N^{-\gamma}}{\sigma_N}.
\end{equation}
We define the intervals $I_n = I(y_n, c_1 \sigma_N) := \{z \in \R; |z-y_n|< c_1\sigma_N\}$ for some constant $c_1 > 0$ to be determined and $y_n = \eta_0 + \frac{n}{N}(x-\eta_0)$ where $y_0 = \eta_0 = F_0$ is the initial condition of the SDDE. We also define $\{x_n; n = 0, \dots, N\}$ where $x_0  = \eta_0 = F_0$, $x_j \in I_j$ for $j = 1, \dots, N-1$ and $x_N = x$. Using the properties of the conditional expectation and the estimate \eqref{lower bound conditioned} we find that
\[
\E[\delta_x(F_N)] =  \E[E_{t_{N-1}}[\delta_x(F_N)]] \geq \frac{c}{\sigma_N}\E\left[\exp \left( - \frac{(x-F_{N-1})^2}{2 \lambda^2 \sigma^2_N} \right) -  C N^{-\gamma}\right]
\]
for a certain $\gamma > 0$. Hence,
\[
\begin{split}
    &\E[\delta_x(F_N)]  \\
    \geq & \frac{c}{\sigma_N} \int_{\R}\E\left[\left(\exp \left( - \frac{(x-F_{N-1})^2}{2 \lambda^2 \sigma^2_N} \right) -  C N^{-\gamma}\right)\delta_{x_{N-1}}(F_{N-1})\right]dx_{N-1} \\
    \geq & \frac{c}{\sigma_N} \int_{I_{N-1}}\E\left[\left(\exp \left( - \frac{(x-F_{N-1})^2}{2 \lambda^2 \sigma^2_N} \right) - C N^{-\gamma}\right)\delta_{x_{N-1}}(F_{N-1})\right]dx_{N-1}.
\end{split}
\]
Since we want this integral to be non-zero (otherwise, the resulting bound is $p_t(x) \geq 0$ which gives us no information) we need the following compatibility conditions to be satisfied:
\begin{enumerate}
     \item $x_{N-1} \in I_{N-1}$,
     \item $|x_{N-1} - F_{N-1}| \leq c_1\sigma_N$.
\end{enumerate}
Notice that if we choose $N$ of the form $N \geq \frac{|x-\eta_0|^2}{4c_1^2 t^{2H}}$, then we have 
\begin{equation} \label{x-F(N-1)}
    |x-F_{N-1}|\leq 4c_1 \sigma_N.
\end{equation}
We define a constant $c_2 \geq \frac{1}{4c_1^2}$ such that
\[
N = \frac{c_2|x-\eta_0|^2}{t^{2H}}.
\]
(Notice that for all $c_2 \geq \frac{1}{4c_1^2}$, the condition \eqref{x-F(N-1)} is satisfied). The idea is that $c_2$ is a constant that controls how big we want $N$ to be, so the larger $c_2$ is, the larger $N$ is. We choose $c_1$ small enough such that $\exp(-\frac{8c_1^2}{\lambda^2}) \geq 1/2$ and, for the moment, $c_2$ is chosen so that $N = \frac{c_2 |x-\eta_0|^2}{t^{2H}}$ satisfies $C N^{-\gamma} \leq 1/4$. Under this first restrictions, we get on the one hand
\begin{equation} \label{lower bound exponential}
\exp \left(- \frac{(x-F_{N-1})^2}{2\lambda^2 \sigma^2_N} \right) \geq \exp \left(- \frac{16c_1^2 \sigma_N^2}{2\lambda^2 \sigma^2_N} \right)  =\exp \left(- \frac{8c_1^2}{\lambda^2} \right) \geq \frac{1}{2}.
\end{equation}
On the other hand, using that $CN^{-\gamma} \leq 1/4$, we have

\begin{align*}
     &\frac{c}{\sigma_N} \int_{I_{N-1}}\E\left[\left(\exp \left( - \frac{(x-F_{N-1})^2}{2 \lambda^2 \sigma^2_N} \right) - C N^{-\gamma}\right)\delta_{x_{N-1}}(F_{N-1})\right]dx_{N-1}\\
     \geq &\frac{c}{\sigma_N} \int_{I_{N-1}} \E \left[ \left( \frac{1}{2} - \frac{1}{4}\right)\delta_{x_{N-1}}(F_{N-1}) \right] dx_{N-1},
\end{align*}
so we are lead to
\[
\E[\delta_x(F_N)] \geq \frac{c}{4\sigma_N} \int_{I_{N-1}} \E[\delta_{x_{N-1}}(F_{N-1})] dx_{N-1}.
\]
In particular, this constraints imply that $E_{t_{n-1}}[\delta_{x_{n}}(F_n)] \geq \frac{c}{4\sigma_N}$ for all $n = 1, \dots, N$. Indeed, by equation \eqref{lower bound conditioned} we have
\[
\begin{split}
E_{t_{n-1}}[\delta_{x_{n}}(F_n)] \geq &\frac{c}{\sigma_N} \left[\exp \left( - \frac{(x_{n}-F_{n-1})^2}{2 \lambda^2 \sigma^2_N} \right) - C N^{-\gamma}\right]\\
\geq &\frac{c}{\sigma_N} \left[ \exp \left( - \frac{(x_{n}-F_{n-1})^2}{2 \lambda^2 \sigma^2_N} \right) - \frac{1}{4} \right]
\end{split}
\]
Now, using the fact that $$|x_n - F_{n-1}| \leq |x_n - y_n| + |y_n - y_{n-1}| + |y_{n-1} - x_{n-1}| + |x_{n-1} - F_{n-1}| \leq 4c_1 \sigma_N$$ if $|y_n - y_{n-1}| \leq c_1 \sigma_N$, we have that
\[
\exp \left( - \frac{(x_{n}-F_{n-1})^2}{2 \lambda^2 \sigma^2_N} \right) \geq \exp \left( - \frac{16c_1^2 \sigma_N^2}{2 \lambda^2 \sigma^2_N} \right) \geq \frac{1}{2}.
\]
It is necessary to remark that the new constraint $|y_n - y_{n-1}| \leq c_1 \sigma_N$ is not restrictive. Indeed, in order to get a non-trivial lower bound, the condition $|y_n - y_{n-1}| \leq c_1 \sigma_N$ will appear naturally. Notice that, iterating one step from $N-1$ to $N-2$ and using that $\delta_{x_{N-2}}(F_{N-2})$ is $\mathcal{F}_{t_{N-2}}$-measurable we get
\[
\begin{split}
    \E[\delta_x(F_N)] \geq &\frac{c}{4\sigma_N} \int_{I_{N-1}}\E[\delta_{x_{N-1}}(F_{N-1})] dx_{N-1}\\
    \geq &\frac{c}{4\sigma_N}\int_{I_{N-1}} \int_{I_{N-2}} \E[\delta_{x_{N-1}}(F_{N-1}) \delta_{x_{N-2}}(F_{N-2})] dx_{N-2} dx_{N-1} \\
    = & \frac{c}{4\sigma_N}\int_{I_{N-1}} \int_{I_{N-2}} \E\left[ E_{t_{N-2}}[\delta_{x_{N-1}}(F_{N-1})] \delta_{x_{N-2}}(F_{N-2})\right] dx_{N-2} dx_{N-1} \\
    \geq & \left(\frac{c}{4\sigma_N}\right)^2 |I_{N-1}|\int_{N-2} \E[\delta_{x_{N-2}}(F_{N-2})]dx_{N-2}
\end{split}
\]
as long as $y_{N-1}, y_{N-2} \in I_{N-1}\cap I_{N-2} \neq \emptyset$, which happens if $|y_{N-1} - y_{N-2}| \leq c_1 \sigma_N$ (as mentioned in the previous paragraph, the condition $|y_n - y_{n-1}| \leq c_1 \sigma_N$ would arise naturally). From now on, we will assume that $|y_{n} - y_{n-1}|\leq c_1 \sigma_N$ for all $n = 1, \dots N$ and then we will check that with our choice of $N$, $c_1$ and $c_2$ this condition is satisfied. One can check by iterating this formula from $N-1$ to $1$ and using that all intervals $I_j$ have te same length as the interval centered at the origin with radius $c_1 \sigma_N$ that
\[
\begin{split}
\E[\delta_x(F_N)] \geq &\left(\frac{c}{4\sigma_N}\right)^N |I(0,c_1\sigma_N)|^{N-1} \\
= &\left( \frac{c}{4}\right)^N \left(\frac{N^{1/2}}{t^{H}} \right)^N \left(\frac{t^{H}} {N^{1/2}} \right)^{N-1} c_1^{N-1} \\
= & \frac{N^{1/2}}{t^{H}} \left( \frac{c}{4}\right)^N c_1^{N-1} \\
 = &\frac{1}{c_1 t^H}\exp\left( N\log\left( \frac{c \cdot c_1}{4}\right)+\frac{1}{2}\log(N)\right).
\end{split}
\]
To the previous restriction on $c_1$, that was $\exp\left(\frac{-8c_1^2}{\lambda^2} \right) \geq \frac{1}{2}$, we also impose that
\[
\rho = -\log\left( \frac{c \cdot c_1}{4}\right) > 0,
\]
and
\[
N \rho^N \geq 1.
\]
Now, taking into account that $N = \frac{c_2 |x-\eta_0|^2}{t^{2H}}$, and using that $N \geq 1$ (because we want $N$ to be a natural number) we get that
\[
\E\left[ \delta_x(F_N)\right] \geq \frac{1}{c_1 t^H}\exp \left( -\frac{\rho c_2 |x-\eta_0|^2}{t^{2H}}\right)
\]
which is a bound consistent with the one in \ref{e: bound main result rT}. The only work left to do is adjusting the constant $c_2$ so that it is compatible with all of the previous constraints. On the one hand, we have to take $c_2$ so that
\[
|y_{n+1}-y_n| \leq c_1 \sigma_N.
\]
Now, from the definition of the $y_n$'s, we have
\[
|y_{n+1}-y_n| = \frac{|x-\eta_0|}{N} = \frac{|x-\eta_0|}{\sqrt{N}}\frac{1}{\sqrt{N}}
\]
and, using that $N = \frac{c_2|x-\eta_0|^2}{t^{2H}}$ we get
\[
 |y_{n+1}-y_n| = \frac{|x-\eta_0|}{\sqrt{N}}\frac{1}{\sqrt{c_2}} \frac{t^H}{|x-\eta_0|} = \frac{\sigma_N}{\sqrt{c_2}}.
\]
Hence, it is enough to consider $c_2$ in such a way that
\[
\frac{1}{\sqrt{c_2}} \leq c_1, \quad c_2 \geq \frac{1}{4c_1^2}
\]
where the first constraint follows from the fact that we need $|y_{n}-y_{n-1}| \leq c_1 \sigma_N$ and the second constraint follows from relation \eqref{x-F(N-1)}. Choosing therefore an arbitrarily large $c_2$ satisfying
\[
\frac{1}{\sqrt{c_2}}\leq c_1
\]
is enough to conclude the bound. Finally, renaming then the constants, we obtain the desired bound \ref{e: bound main result rT}.

As a corollary of this result, we can state the conclusion of this work.
\begin{corol}
    Let $X$ be the solution to \eqref{e: main equation}. Under the same hypothesis as in Theorem \ref{t: main result}, the density function $p_t(x)$ of $X_t$ is strictly positive in its support.
\end{corol}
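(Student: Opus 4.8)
The plan is to read strict positivity directly off the two Gaussian-type lower bounds established in Theorem \ref{t: main result}, observing that each right-hand side is strictly positive at every point $x$ at which it is asserted. I would organise the argument around the same two time regimes used throughout the paper, and then combine them.

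First, for $t \in (r,T]$, I would invoke the bound \eqref{e: bound main result rT}, namely $p_t(x) \geq \frac{c_3}{t^H}\exp\bigl(-\frac{c_4(x-\eta_0)^2}{t^{2H}}\bigr)$. Since $c_3>0$, $t^H>0$, and the exponential factor is strictly positive for every $x \in \R$, the entire right-hand side is strictly positive on all of $\R$. Consequently $p_t(x)>0$ at every point of the support, and this regime requires no further work.

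Second, for $t \in (0,r]$, I would use \eqref{e: bound main result 0r}, $p_t(x) \geq \frac{\E(|X_t-m_t|)}{c_1 t^{2H}}\exp\bigl(-\frac{c_2(x-m_t)^2}{t^{2H}}\bigr)$. Here the exponential and the factor $c_1 t^{2H}$ are again strictly positive, so the only quantity that could spoil positivity is the prefactor $\E(|X_t-m_t|)$. The key step is therefore to verify $\E(|X_t-m_t|)>0$, equivalently that $X_t$ is not almost surely constant. This is precisely what non-degeneracy delivers: by Theorem \ref{representation} (or, already for this regime, by the existence of a genuine density in Proposition \ref{density in (0,r]}) the law of $X_t$ admits a density, so $X_t$ cannot be deterministic and hence $\E(|X_t-m_t|)>0$. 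With this, the lower bound is strictly positive for every $x$ in the support.

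Combining the two regimes yields $p_t(x)>0$ for every $t \in (0,T]$ and every $x$ in the support of $p_t$, which is the assertion of the corollary. The only genuinely non-trivial point, and the one I would be most careful about, is the strict positivity of $\E(|X_t-m_t|)$ in the first regime; everything else is immediate from the positivity of the Gaussian factors. I would isolate this as a one-line observation deducing non-degeneracy of $X_t$ from the ellipticity hypothesis $\lambda \leq \|\sigma\|_\infty$, which forces a non-vanishing diffusion coefficient and hence a non-constant solution. If one wishes to upgrade ``almost every $x$ in the support'' to ``every $x$ in the support,'' I would additionally appeal to the smoothness of $p_t$ guaranteed by the standing hypotheses, so that the strictly positive lower bound, valid a.e., propagates to all points of the support by continuity.
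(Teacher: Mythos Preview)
Your proposal is correct and follows exactly the paper's approach: the paper's proof of this corollary is the single sentence ``Is a direct consequence of Theorem \ref{t: main result},'' and you are simply unpacking that sentence. Your treatment is in fact more careful than the paper's, since you explicitly address why the prefactor $\E(|X_t-m_t|)$ is strictly positive in the regime $t\in(0,r]$---a point the paper leaves implicit.
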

\begin{proof}
    Is a direct consequence of Theorem \ref{t: main result}.
\end{proof}
\appendix
\section{Real Analysis tools}
\begin{lema} \label{Ap 1}
    Let $\mathcal{H}$ be a real Hilbert space. Denote by $\langle \cdot, \cdot \rangle$ the scalar product in $\mathcal{H}$ and $||\cdot||$ its norm. Then, for any $f,g \in \mathcal{H}$, we have
    \[
    ||f+g||^2 \geq \frac{||f||^2}{2}-||g||^2
    \]
\end{lema}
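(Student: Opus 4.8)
The plan is to prove this purely by the standard expansion of the squared norm followed by a weighted version of Young's inequality, choosing the weight so that the coefficient of $||f||^2$ lands exactly on $1/2$. This is an elementary inequality in a real Hilbert space, so there is no genuine obstacle; the only point requiring a moment's care is the selection of the constant in the cross-term estimate.

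First I would expand
\[
||f+g||^2 = ||f||^2 + 2\langle f, g\rangle + ||g||^2,
\]
so that the entire problem reduces to controlling the cross term $2\langle f,g\rangle$ from below. By the Cauchy--Schwarz inequality one has $\langle f,g\rangle \geq -||f||\,||g||$, hence $2\langle f,g\rangle \geq -2||f||\,||g||$. The next step is to split the product $2||f||\,||g||$ asymmetrically via the weighted arithmetic--geometric mean inequality: choosing the weights $\tfrac12$ and $2$ gives
\[
2||f||\,||g|| \leq \frac{1}{2}||f||^2 + 2||g||^2,
\]
since by AM--GM $\tfrac12||f||^2 + 2||g||^2 \geq 2\sqrt{\tfrac12 \cdot 2}\,||f||\,||g|| = 2||f||\,||g||$. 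The weights are dictated precisely by the target bound: we need the $||f||^2$ loss to be at most $\tfrac12||f||^2$.

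Combining these two estimates yields $2\langle f,g\rangle \geq -\tfrac12||f||^2 - 2||g||^2$, and substituting back into the expansion gives
\[
||f+g||^2 \geq ||f||^2 - \frac{1}{2}||f||^2 - 2||g||^2 + ||g||^2 = \frac{||f||^2}{2} - ||g||^2,
\]
which is the claimed inequality. Equivalently, one can observe that the whole assertion rearranges to $\tfrac12||f||^2 + 2\langle f,g\rangle + 2||g||^2 \geq 0$, and after applying Cauchy--Schwarz this is bounded below by $\tfrac12(\,||f|| - 2||g||\,)^2 \geq 0$, exhibiting the inequality as a perfect square and confirming that equality is attained exactly when $||f|| = 2||g||$ with $f$ and $g$ anti-aligned.
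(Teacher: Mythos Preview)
Your proof is correct and is essentially the same as the paper's: both amount to showing that $\tfrac{1}{2}\|f\|^2 + 2\langle f,g\rangle + 2\|g\|^2 \geq 0$, which the paper does in one stroke by recognizing this quantity as $\|\tfrac{1}{\sqrt{2}}f + \sqrt{2}g\|^2$, whereas you reach it via Cauchy--Schwarz followed by the weighted AM--GM inequality. Your closing remark about the perfect-square form is exactly the paper's argument, so the two proofs coincide up to packaging.
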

\begin{proof}
    Notice that
    \begin{equation} \label{a: 1}
    \langle \frac{1}{\sqrt{2}}f + \sqrt{2}g, \frac{1}{\sqrt{2}}f + \sqrt{2}g \rangle = \frac{1}{2}||f||^2 + 2||g||^2 + 2\langle f, g \rangle \geq 0.
    \end{equation}
    Then, using that
    \[
    ||f + g||^2  = ||f||^2 + ||g||^2 + 2\langle f, g\rangle,
    \]
    equation \eqref{a: 1} turns into
    \[
    ||f+g||^2 - \frac{||f||^2}{2} + ||g||^2 \geq 0,
    \]
    which proves the inequality.
\end{proof}

\begin{lema} \label{Ap 2}
    Let $0<a<b$ be two positive real constants. Then,
    \begin{equation} \label{a: 2}
    \int_{a}^b \int_a^b |u-v|^{2H-2} du dv = c_H |b-a|^{2H}
    \end{equation}
\end{lema}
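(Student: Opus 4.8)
The plan is to exploit the fact that the integrand depends on $u$ and $v$ only through their difference, so that the double integral is invariant under simultaneous translation of both variables. First I would apply the change of variables $u \mapsto u - a$, $v \mapsto v - a$, which maps the square $[a,b]^2$ onto $[0,L]^2$ with $L := b-a$ and leaves $|u-v|^{2H-2}$ unchanged. This reduces the claim to the normalized identity
\[
\int_0^L \int_0^L |u-v|^{2H-2} \, du \, dv = c_H\, L^{2H}.
\]

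Next I would use the symmetry of the integrand under the swap $u \leftrightarrow v$ to fold the square onto the triangle $\{0 \le v \le u \le L\}$, rewriting the integral as
\[
2 \int_0^L \int_0^u (u-v)^{2H-2} \, dv \, du.
\]
The inner integral is then handled by the substitution $w = u-v$, which gives $\int_0^u w^{2H-2}\,dw = \frac{u^{2H-1}}{2H-1}$. Here the standing hypothesis $H > 1/2$ is exactly what guarantees $2H-2 > -1$, so the diagonal singularity of the integrand is integrable and the primitive is finite; this is the only point in the argument that requires care. A final integration in $u$ yields
\[
\frac{2}{2H-1}\int_0^L u^{2H-1}\,du = \frac{2}{2H-1}\cdot\frac{L^{2H}}{2H} = \frac{L^{2H}}{H(2H-1)},
\]
so that the constant can be identified explicitly as $c_H = \frac{1}{H(2H-1)} = \alpha_H^{-1}$.

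Since everything reduces to a pair of elementary one-dimensional integrations, there is no genuine obstacle in this proof; the computation is entirely routine once translation invariance and symmetry have been used to simplify the domain. The sole conceptual ingredient is the integrability at the diagonal $u = v$, which holds precisely because $H > 1/2$, and reassembling the pieces gives the stated identity together with the value of $c_H$.
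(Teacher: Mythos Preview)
Your argument is correct and complete; the translation--symmetry reduction followed by two elementary one-dimensional integrations is entirely sound, and you correctly identify the constant as $c_H = \alpha_H^{-1}$.

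The paper takes a different, more structural route: rather than computing the integral directly, it recognizes the left-hand side as $\alpha_H^{-1}\langle \1_{[a,b]}, \1_{[a,b]}\rangle_{\mathcal{H}}$, writes $\1_{[a,b]} = \1_{[0,b]} - \1_{[0,a]}$, and then expands the inner product using the defining relation $\langle \1_{[0,s]}, \1_{[0,t]}\rangle_{\mathcal{H}} = R_H(s,t)$ together with the explicit form of the fBm covariance. Your approach is self-contained calculus and makes the integrability condition $H>1/2$ at the diagonal visible; the paper's approach avoids any integration by leaning on the Hilbert-space scaffolding already set up in the preliminaries, which is arguably the ``right'' conceptual explanation for why the answer is exactly the variance of an fBm increment. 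Both reach the same constant $c_H = \alpha_H^{-1}$.
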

\begin{proof}
   Notice that the left hand side of \eqref{a: 2} can be expressed as
    \[
    \int_{a}^b \int_a^b |u-v|^{2H-2} du dv =\int_{0}^T \int_0^T \1_{[a,b]}(u) \1_{[a,b]}(v)|u-v|^{2H-2} du dv =\frac{1}{\alpha_H} \langle \1_{[a,b]}, \1_{[a,b]} \rangle_{\mathcal{H}}.
    \]
    Recall that
    \[
    \langle \1_{[0,a]},\1_{[0,b]} \rangle_{\mathcal{H}} = R_H(a,b)
    \]
    and $R_H(a,a) = a^{2H}$. Now, using that $\1_{[a,b]}(u) = \1_{[0,b]}(u) - \1_{[0,a]}(u)$ we have that
    \[
    \langle \1_{[a,b]}, \1_{[a,b]} \rangle_{\mathcal{H}} = \langle \1_{[0,b]}-\1_{[0,a]}, \1_{[0,b]} - \1_{[0,a]} \rangle_{\mathcal{H}} = R_H(b,b) + R_H(a,a) - 2R_H(a,b).
    \]
    Using now the definition of $R_H$, we find that
    \[
    R_H(b,b) + R_H(a,a) - 2R_H(a,b) = b^{2H} + a^{2H} - b^{2H} - a^{2H} + |b-a|^{2H} = |b-a|^{2H},
    \]
    so equality \eqref{a: 2} holds.
\end{proof}
\bibliographystyle{alpha}
\bibliography{references.bib}
\end{document}